\newtheorem{theorem}{Theorem}[section]
\newtheorem{lemma}[theorem]{Lemma}
\theoremstyle{definition}
\newtheorem{proposition}[theorem]{Proposition}
\newtheorem{corollary}[theorem]{Corollary}
\theoremstyle{remark}
\newtheorem{remark}[theorem]{Remark}
\numberwithin{equation}{section}
\begin{document}

\title{Harmonic operators on convolution quantum group algebras}


\author[M. Nemati]{Mehdi Nemati$^{1,2}$}
\address{}
\curraddr{}
\email{}
\thanks{}
\email{m.nemati@iut.ac.ir}
\email{simasoltani@iut.ac.ir}
\author[S. Soltani Renani]{Sima Soltani Renani$^{1}$}
\address{}
\curraddr{}
\email{}
\thanks{}
\address{$^1$ Department of Mathematical Sciences, Isfahan University of Technology,
	Isfahan 84156-83111, Iran}
\address{$^2$ School of Mathematics,
	Institute for Research in Fundamental Sciences (IPM),
	P.O. Box: 19395--5746, Tehran, Iran}



\dedicatory{}

\begin{abstract}
	Let ${\Bbb G}$ be a locally compact quantum group and
	${\mathcal T}(L^2({\Bbb G}))$ be the Banach algebra of trace class operators on $L^2({\Bbb G})$ with the convolution induced by the right fundamental unitary of ${\Bbb G}$. We study the space of harmonic operators $\widetilde{\mathcal H}_\omega$ in ${\mathcal B}(L^2({\Bbb G}))$ associated to a contractive element $\omega\in {\mathcal T}(L^2({\Bbb G}))$. We characterize the existence of non-zero harmonic operators   in ${\mathcal K}(L^2({\Bbb G}))$ and relate them with some properties of the quantum group ${\Bbb G}$, such as finiteness, amenability and co-amenability.
\end{abstract}

\maketitle
\let\thefootnote\relax
\footnotetext{MSC 2020: Primary  22D15, 43A07, Secondary   43A22, 46H05.} 
\footnotetext{{Key words and phrases}: Harmonic operator, convolution algebra,  locally compact quantum group.}

\section*{Introduction}
Let $\mu$ be  a complex  Borel measure on  a locally compact group $G$. A function $f\in L^\infty(G)$ 
is called {$\mu$-harmonic} if   it satisfies the convolution equation $\mu*f=f$.  
This concept, especially when $\mu$
is a probability measure and its support generates
$G$, has been extensively studied in the context of random walks; see for example \cite{az, kv}.
Of interest here is the collection of all $\mu$-harmonic functions, denoted by ${\mathcal H}_\mu$. This is an
abelian von Neumann algebra, but only for a twisted product, and is only a subalgebra
of $L^\infty(G)$ when it is trivial. In \cite{cl}, the dual analogue of this concept
was studied; that is, replacing $L^\infty(G)$ by the group von Neumann algebra $VN(G)$ and
the measure algebra $M(G)$ by the Fourier-Stieltjes algebra $B(G)$.

On the other hand, in \cite{jn} the authors investigated the concept of a general $\mu$-harmonic
operator on $L^2(G)$. The key tool for this was introduced by Ghahramani \cite{ghah}, who defined an isometric representation
$\theta$ of $M(G)$ on ${\mathcal B}(L^2(G))$ which extends the convolution action of  $M(G)$ on $L^\infty(G)$, where $L^\infty(G)$ acts on $L^2(G)$ by multiplication. Hence we can define an operator $T$ to be $\mu$-harmonic if $\theta(\mu)(T)=T$. 

The dual version of the map $\theta$ has been constructed in  \cite{nrs}. Here the completely bounded multipliers
of the Fourier algebra $A(G)$, the algebra $M_{cb}(A(G))$, which contains $B(G)$, is represented in
a completely isometric way on ${\mathcal B}(L^2(G))$ by a map $\widehat{\theta}$. Subsequently,  for $\sigma\in M_{cb}(A(G))$  the authors of \cite{nr} define an operator $T\in {\mathcal B}(L^2(G))$ to be $\sigma$-harmonic if $\widehat{\theta}(\sigma)(T)=T$ and  studied the $\sigma$-harmonic operators in ${\mathcal B}(L^2(G))$.

Motivated
by these observations, harmonic operators in the setting of locally compact quantum groups
have been studied in \cite{knr, nss}. In particular they investigated the structure of  $\mu$-harmonic operators, so in other words  `fixed
point spaces', denoted by ${\mathcal H}_\mu$, associated to arbitrary quantum contractive measures $\mu$. One of the main results
of \cite{knr} is that when $\mu$ is a  quantum probability
measure on ${\Bbb G}$ then the space ${\mathcal H}_\mu$ is a von
Neumann algebra, but with a product usually different from the one in $L^\infty({\Bbb G})$. In particular, if $\mu$ is non-degenerate, then ${\mathcal H}_\mu$  is a subalgebra of $L^\infty({\Bbb G})$  if and only if ${\mathcal H}_\mu={\Bbb C}1$. 

Finally, in \cite{knr2}  for a locally compact quantum group ${\Bbb G}$ 
with a convolution action by a quantum probability measure, the abstract structure of noncommutative
harmonic operators on the
level of ${\mathcal B}(L^2({\Bbb G}))$ is studied in detail and connected to the crossed products of von Neumann algebras.


It is
known that the right fundamental unitary of a locally compact quantum group  ${\Bbb G}$ induces a completely contractive multiplication
$\triangleright$ on the space ${\mathcal T}(L^2({\Bbb G}))$ of trace class operators on $L^2({\Bbb G})$; see \cite{hnr2011,  hnr}. The multiplication $\triangleright$ on ${\mathcal T}(L^2({\Bbb G}))$ induces a natural ${\mathcal T}(L^2({\Bbb G}))$-bimodule structure on ${\mathcal B}(L^2({\Bbb G}))$. We consider the convolution
algebra $\mathcal{T}_\triangleright({\mathbb G}):=\left(\mathcal{T}\left(L^{2}(\mathbb{G})\right), \triangleright\right)$, with focus on the left $\mathcal{T}_\triangleright({\mathbb G})$-module action
on ${\mathcal B}(L^2({\Bbb G}))$ and study harmonic operators on the
level of ${\mathcal B}(L^2({\Bbb G}))$. The paper is organized as follows.

In Section  \ref{sec2}, 
some preliminary definitions and results on locally compact
quantum groups and Banach algebras which are needed, are briefly recalled.

In Section  \ref{sec3}, 
for given a contractive element  $\omega\in \mathcal{T}_\triangleright({\mathbb G})$, we first show that there is a contractive projection from ${\mathcal B}(L^2({\Bbb G}))$  onto $\widetilde{\mathcal H}_\omega$ of $\omega$-harmonic operators; that is,  fixed points in ${\mathcal B}(L^2({\Bbb G}))$ under the action of $\omega$.
For the case that $\omega$ is a non-degenerate state in $\mathcal{T}_\triangleright({\mathbb G})$, we show that $\widetilde{\mathcal H}_\omega$ can be equipped
with a product, different from the one in ${\mathcal B}(L^2({\Bbb G}))$, turning it into a
von Neumann algebra, but $\widetilde{\mathcal H}_\omega$  is a subalgebra of ${\mathcal B}(L^2({\Bbb G}))$  if and only if $\widetilde{\mathcal H}_\omega=L^\infty(\widehat{\Bbb G})$. In this case we prove that ${\Bbb G}$ is compact if and only if $\widetilde{\mathcal H}_\omega\cap {\mathcal K}(L^2({\Bbb G}))\neq\{0\}$.

In Section  \ref{sec4},
we turn our attention to the  pre-annihilator $J_\omega$
of $\widetilde{\mathcal H}_\omega$ as a left ideal in $\mathcal{T}_\triangleright({\mathbb G})$ and use it to characterize some properties of  ${\Bbb G}$ such as finiteness, amenability and co-amenability.
For example, we show that  ${\Bbb G}$ is necessarily amenable if there exists a  state $\omega\in \mathcal{T}_\triangleright({\mathbb G})$  such that $\widetilde{\mathcal H}_\omega=L^\infty(\widehat{\Bbb G})$. We prove that the converse is  also true when $\mathcal{T}_\triangleright({\mathbb G})$ is separable.

In Section \ref{sec5}, 
for the case that ${\Bbb G}$ is discrete and $\omega$ is a contractive element in $\mathcal{T}_\triangleright({\mathbb G})$ we show that the Cesaro sums $\omega_n=\frac{1}{n}\sum_{k=1}^n \omega^k$ do not converge to zero in the 
weak$^*$ topology of  $\mathcal{T}_\triangleright({\mathbb G})$ if and only if $\widetilde{\mathcal H}_\omega\cap {\mathcal K}(L^2({\Bbb G}))\neq\{0\}$.\\

{\section { Preliminaries}}\label{sec2}
The class of 
locally compact quantum groups was first introduced and studied
by Kustermans and Vaes \cite{ku-va1,ku-va2}. Recall that a 
({\it von Neumann algebraic}) {\it locally compact
	quantum group} is a
quadruple ${\mathbb G}=(L^\infty({\mathbb G}), \Gamma, \phi, \psi)$, 
where $L^\infty({\mathbb G})$ is a von Neumann algebra with identity element $1$ and a co-multiplication 
$
\Gamma : L^\infty({\mathbb G})\rightarrow L^\infty({\mathbb G})\bar{\otimes}L^\infty({\mathbb G}).
$
Moreover,  $\phi$ and $\psi$ are normal faithful semifinite left and right Haar weights 
on $L^\infty({\mathbb G})$, respectively.
Here $\bar{\otimes}$ denotes the von Neumann algebra tensor product. 


The predual of $L^\infty({\mathbb G})$ is denoted by $L^1({\mathbb G})$ which is called
\textit{quantum group algebra} of 
$\mathbb{G}$ and the Hilbert space associated with $\phi$ or $\psi$
is denoted by $L^2({\Bbb G})$.
Then $L^\infty({\Bbb G})$ is standardly represented on $L^2({\Bbb G})$ and  the
pre-adjoint of the co-multiplication $\Gamma$ induces on $L^1({\mathbb G})$
an associative completely contractive multiplication
$\Gamma_*:L^1({\mathbb G})\widehat{\otimes}L^1({\mathbb G})\rightarrow L^1({\mathbb G})$,
where $\widehat{\otimes}$ is the operator space projective tensor product.
Therefore, $L^1({\mathbb G})$
is a Banach algebra under the product $\star$ given by
$$f\star g:=\Gamma_*(f\otimes g)\in L^1({\mathbb G})$$ for all $f,g\in L^1({\mathbb G})$.
Moreover,  the module actions of  $L^1({\mathbb G})$ on $L^\infty({\mathbb G})$ are given by
$$f\star x:=(\iota\otimes f)(\Gamma(x)),\quad x\star f:=(f\otimes\iota)(\Gamma(x))$$
for all $f\in L^1({\mathbb G})$ and $x\in L^\infty({\mathbb G})$.

For every locally compact quantum group ${\Bbb G}$, there exists  a
left fundamental unitary operator $W$ on $L^2({\Bbb G})\otimes L^2({\Bbb G})$ and  a right fundamental
unitary operator $V$ on $L^2({\Bbb G})\otimes L^2({\Bbb G})$ such that the co-multiplication $\Gamma$ on $L^\infty({\Bbb G})$ can be expressed as
$$
\Gamma(x)=W^*(1\otimes x)W=V(x\otimes 1)V^*\quad(x\in L^\infty({\Bbb G})).
$$
The left regular representation $\lambda: L^1({\Bbb G})\rightarrow {\mathcal B}(L^2({\Bbb G}))$ is defined by
$$
\lambda(f)=(f\otimes \iota)(W)\quad (f\in L^1({\Bbb G})),
$$
which is an injective and completely contractive algebra homomorphism from $L^1({\Bbb G})$ into
${\mathcal B}(L^2({\Bbb G}))$. Then $L^\infty(\widehat{\mathbb G})={\{\lambda(f): f\in L^1(\mathbb G)\}}^{''}$ is the von Neumann algebra associated with the
dual quantum group $\widehat{\mathbb G}$ of ${\mathbb G}$. 
Similarly, we have the right regular representation
$\rho: L^1({\Bbb G})\rightarrow {\mathcal B}(L^2({\Bbb G}))$ defined by
$$
\rho(f)=(\iota\otimes f)(V)\quad (f\in L^1({\Bbb G})),
$$
which is also an injective and completely contractive algebra homomorphism from $L^1({\Bbb G})$ into
${\mathcal B}(L^2({\Bbb G}))$. Then $L^\infty(\widehat{\mathbb G}')={\{\rho(f): f\in L^1(\mathbb G)\}}^{''}$ is the von Neumann algebra associated with the
quantum group $\widehat{\mathbb G}'$. Moreover, we have  $W\in L^\infty(\mathbb{G})\bar{\otimes} L^\infty(\widehat{\mathbb{G}})$, $V\in L^\infty(\widehat{\mathbb{G}}')\bar{\otimes} L^\infty(\mathbb{G})$ and $L^\infty(\widehat{\mathbb G})'=L^\infty(\widehat{\mathbb G}')$.

The \textit{reduced quantum group $C^*$-algebra} of $L^\infty({\mathbb G})$ is defined as
$$C_0({\mathbb G}):=\overline{\{(\iota\otimes\omega)(W);\ \omega\in {\mathcal B}(L^2({\mathbb G}))_*\}}^{\|.\|}.$$
We say that ${\mathbb G}$ is   compact if  $1\in C_0({\mathbb G})$ 
and is discrete if the dual quantum group $\widehat{\mathbb G}$
of ${\mathbb G}$ is compact, which is equivalent
to $L^1({\mathbb G})$ being unital; see \cite{rund2}.

The co-multiplication $\Gamma$ maps $C_0({\mathbb G})$ into the
multiplier algebra $M(C_0({\mathbb G})\otimes C_0({\mathbb G}))$ of the minimal $C^*$-algebra tensor product $C_0({\mathbb G})\otimes C_0({\mathbb G})$. Thus, we can define the completely contractive
product $\star$ on $C_0({\mathbb G})^*=M({\mathbb G})$ by
$$
\langle \omega\star\nu, x\rangle=(\omega\otimes\nu)(\Gamma x)\quad (x\in C_0({\mathbb G}), \omega,\nu\in M({\mathbb G}))
$$
whence  $(M({\mathbb G}), \star)$ is a completely contractive Banach algebra  and contains $L^1({\mathbb G})$ as a norm
closed two-sided ideal. We recall that a
left invariant mean on $L^\infty({\mathbb G})$, is a state $m\in L^\infty({\mathbb G})^*$ satisfying 
$$
\langle m, x\star f\rangle=\langle f, 1\rangle \langle m, x\rangle\quad (f\in L^1({\mathbb G}), x\in L^\infty({\Bbb G})).
$$
Right and (two-sided) invariant means are defined similarly.
A locally compact quantum
group ${\mathbb G}$ is said to be amenable if there exists a left (equivalently, right or two-sided) invariant mean on $L^\infty({\mathbb G})$; see \cite[Propodition 3]{dqvaes}. We also recall that, $\mathbb{G}$ is called \emph{co-amenable} if $L^1({\mathbb G})$ has a bounded approximate identity. The subspace $LUC({\mathbb G})$ of $L^\infty({\mathbb G})$ is defined by $LUC({\mathbb G})=\langle L^\infty({\mathbb G})\star L^1({\mathbb G})\rangle$, where $\langle\cdot\rangle$ denotes the closed linear span. Moreover, we have the following inclusions
$$
C_0({\Bbb G})\subseteq LUC({\mathbb G})\subseteq M(C_0({\Bbb G})).
$$ 
The right fundamental unitary $V$ of $\mathbb{G}$ induces a co-associative co-multiplication
$$
\widetilde{\Gamma}: \mathcal{\mathcal B}\left(L^{2}(\mathbb{G})\right) \ni x \mapsto V(x \otimes 1) V^{*} \in {\mathcal B}(L^{2}(\mathbb{G})) \bar{\otimes} L^\infty(\mathbb{G}),
$$
and $\widetilde{\Gamma}|_{L^{\infty}(\mathbb{G})}=\Gamma$. The pre-adjoint of $\widetilde{\Gamma}$ induces an associative completely contractive multiplication on space $\mathcal{T}(L^{2}(\mathbb{G}))={\mathcal B}(L^{2}(\mathbb{G}))_*$  of trace class operators on $L^{2}(\mathbb{G})$, defined by
$$
\triangleright: \mathcal{T}(L^{2}(\mathbb{G})) \widehat{\otimes} \mathcal{T}(L^{2}(\mathbb{G})) \ni \omega \otimes \tau \mapsto \omega \triangleright \tau=\widetilde{\Gamma}_{*}(\omega \otimes \tau) \in \mathcal{T}(L^{2}(\mathbb{G})).
$$
It was shown in \cite[Lemma 5.2]{hnr2011}, that the pre-annihilator
$L^{\infty}(\mathbb{G})_{\perp}$ of $L^{\infty}(\mathbb{G})$ in $\mathcal{T}(L^{2}(\mathbb{G}))$ 
is a norm closed two-sided ideal in
$(\mathcal{T}(L^{2}(\mathbb{G})), \triangleright)$ 
and the complete quotient map
$$
\pi: \mathcal{T}(L^{2}(\mathbb{G})) \ni \omega \mapsto f=\left.\omega\right|_{L^{\infty}(\mathbb{G})} \in L^{1}(\mathbb{G})
$$
is a completely contractive algebra homomorphism from 
$\mathcal{T}_\triangleright({\mathbb G}):=\left(\mathcal{T}\left(L^{2}(\mathbb{G})\right), \triangleright\right)$
onto $L^{1}(\mathbb{G})$. We always have $\langle \mathcal{T}_\triangleright({\mathbb G})\triangleright\mathcal{T}_\triangleright({\mathbb G})\rangle=\mathcal{T}_\triangleright({\mathbb G})$ and the multiplication $\triangleright$ defines  a canonical
$\mathcal{T}_\triangleright({\mathbb G})$-bimodule structure on
${\mathcal B}\left(L^{2}(\mathbb{G})\right)$. It is also known from \cite[Proposition 5.3]{hnr2011} that $\langle {\mathcal B}(L^2({\mathbb G}))\triangleright\mathcal{T}_\triangleright({\mathbb G})\rangle=LUC({\mathbb G})$.  In particular,   the  actions of $\mathcal{T}_\triangleright({\mathbb G})$ on $L^{\infty}(\mathbb{G})$ satisfies
$$\omega \triangleright x=\pi(\omega)\star x, \quad x\triangleright \omega=x\star\pi(\omega)$$
for all $\omega\in \mathcal{T}_\triangleright({\mathbb G})$ and $ x\in L^{\infty}(\mathbb{G})$. Let ${\mathcal K}(L^2({\mathbb G}))$ be the $C^*$-algebra of compact operators on $L^2({\mathbb G})$. Then the  
equality $\langle {\mathcal K}(L^2({\mathbb G}))\triangleright\mathcal{T}_\triangleright({\mathbb G})\rangle= C_0({\mathbb G})$ was established in \cite{hnr}.

Note that since $V \in L^{\infty}(\widehat{\mathbb{G}}^{\prime}) \bar{\otimes} L^{\infty}(\mathbb{G})$,
the bimodule action  of $\mathcal{T}_\triangleright({\mathbb G})$ on $L^{\infty}(\widehat{\mathbb{G}})$ becomes rather trivial. In fact, for
$\hat{x} \in L^{\infty}(\widehat{\mathbb{G}})$ and $\omega \in \mathcal{T}_\triangleright({\mathbb G})$
we have
$$\hat{x} \triangleright \omega=(\omega \otimes \iota) V(\hat{x} \otimes 1) V^{*}=\langle\omega, \hat{x}\rangle 1,
\quad \omega \triangleright \hat{x}=(\iota \otimes \omega) V(\hat{x} \otimes 1) V^{*}=\langle\omega, 1\rangle \hat{x}.$$

We recall  some notation related to Banach algebras ${\mathcal A}$. As is well known, ${\mathcal A}^*$ is canonically a
Banach ${\mathcal A}$-bimodule with the actions
$$
\langle x\cdot a, b\rangle=\langle x, ab\rangle, \quad \langle a\cdot x, b\rangle=\langle x, ba\rangle   
$$
for all $a,b\in {\mathcal A}$ and $x\in{\mathcal A}^*$.  It is known that there are two Banach algebra multiplications, $\square$ and $\diamondsuit$  on
$\mathcal{A}^{* *}$, each extending the multiplication  on $\mathcal{A}$. 
For $m, n \in \mathcal{A}^{* *}$ and $x \in \mathcal{A}^{*},$ the left Arens product $\square$ on $\mathcal{A}^{* *}$ is given  by the left $\mathcal{A}$-module structure on $\mathcal{A}$ as follows
$$
\langle m \square n, x\rangle=\langle m, n \square x\rangle
$$
where $n \square x \in \mathcal{A}^{*}$ is defined by $\langle n \square x, a\rangle=\langle n, x \cdot a\rangle$ 
for all $a \in \mathcal{A}$. Similarly, the right Arens product $\Diamond$  is defined by considering ${\mathcal A}$ as a right ${\mathcal A}$-module.\\

\section{{Harmonic operators in ${\mathcal B}(L^2({\Bbb G}))$}}\label{sec3}

Let $m\in{\mathcal B}(L^{2}(\mathbb{G}))^*$.
Then, we can define the right $\mathcal{T}_\triangleright({\mathbb G})$-module
map $L_m$ on ${\mathcal B}(L^{2}(\mathbb{G}))$ via
$$
L_m(x)=m\square  x\quad (x\in {\mathcal B}(L^{2}(\mathbb{G}))),
$$
where $m \square x \in {\mathcal B}(L^{2}(\mathbb{G}))$ is defined by $\langle m \square x, \omega\rangle=\langle m, x \triangleright \omega\rangle$. We also have $\|L_m\|_{cb}\leq\|m\|$
, and if we let $\mathcal{CB}_{\mathcal{T}_\triangleright}({\mathcal B}(L^{2}(\mathbb{G})))$ denote the
algebra of completely bounded right $\mathcal{T}_\triangleright({\mathbb G})$-module maps on $B(L^{2}(\mathbb{G}))$, it
follows that the map  $$\Phi: {\mathcal B}(L^{2}(\mathbb{G}))^*\longrightarrow \mathcal{CB}_{\mathcal{T}_\triangleright}({\mathcal B}(L^{2}(\mathbb{G}))),\quad m\mapsto L_m$$ is a weak$^*$-weak$^*$ continuous, contractive, 
algebra homomorphism. For $m\in{\mathcal B}\left(L^{2}(\mathbb{G})\right)^*$  define $\widetilde{\mathcal H}_m$ to be the set of all $m$-harmonic operators; that is, 
$$
\widetilde{\mathcal H}_m=\{x\in {\mathcal B}(L^2({\Bbb G})): L_m(x)=x\}.
$$
Since for every
$\hat{x} \in L^{\infty}(\widehat{\mathbb{G}})$ and $\omega \in \mathcal{T}_\triangleright({\mathbb G})$
we have $ \hat{x}\triangleright \omega =\langle\omega, \hat{x}\rangle 1$, it follows that for $m\in{\mathcal B}(L^{2}(\mathbb{G}))^*$  with $\langle m, 1\rangle=1$ we obtain $L^\infty(\widehat{\Bbb G})\subseteq\widetilde{\mathcal H}_m$.

Given $\omega\in \mathcal{T}_\triangleright({\Bbb G})$ with $\|\omega\|=1$ and $n\in{\Bbb N}$ we define
$$
\omega_n:=\frac{1}{n}\sum_{k=1}^n \omega^k
$$
where $\omega^k$ stands for the $k$th power of $\omega$ with respect to the product $\triangleright$
in $\mathcal{T}_\triangleright({\Bbb G})$. Now, for a
free ultrafilter $\mathcal{U}$ on ${\Bbb N}$, we consider $m_{\mathcal{U}}$ in ${\mathcal B}(L^2({\Bbb G}))^*$ given by $$m_{\mathcal{U}}:=w^*-\lim_{\mathcal U}\omega_n.$$
We denote by $S(\mathcal{T}_\triangleright({\Bbb G}))$ the set of all states in $\mathcal{T}_\triangleright({\Bbb G})$. We are particularly
interested in the case when  $\omega\in S(\mathcal{T}_\triangleright({\Bbb G}))$. 
\begin{remark}\label{th2}
	Note that, under the natural embedding $\mathcal{T}_\triangleright({\Bbb G})\hookrightarrow \mathcal{T}_\triangleright({\Bbb G})^{**}={\mathcal B}(L^2({\Bbb G}))^*$, we have
	$\omega\square m=\omega\triangleright m$ and $m\square \omega=m\triangleright \omega$ 
	for all $\omega\in\mathcal{T}_\triangleright({\Bbb G})$ and $ m\in\mathcal{T}_\triangleright({\Bbb G})^{**}$, where $\omega\triangleright m$ and $m\triangleright \omega$  are the canonical $\mathcal{T}_\triangleright({\Bbb G})$-module actions on $\mathcal{T}_\triangleright({\Bbb G})^{**}$.
	Now, let $\omega\in \mathcal{T}_\triangleright({\Bbb G})$ with $\|\omega\|=1$ and let $\mathcal{U}$ be a free ultrafilter on ${\Bbb N}$. Then, since $\omega\triangleright\omega_n=\omega_n\triangleright\omega=\omega_n+\frac{1}{n}(\omega^{n+1}-\omega^n)$, it is easily verified that
	$$m_{\mathcal{U}}=m_{\mathcal{U}}\triangleright\omega=\omega\triangleright m_{\mathcal{U}}.$$This shows that $m_{\mathcal{U}}=w^*-\lim_{\mathcal U}(\omega_n\triangleright m_{\mathcal{U}})=m_{\mathcal{U}}\square m_{\mathcal{U}}$, which implies that $m_{\mathcal{U}}$ is  either $0$ or a contractive idempotent. 
	Moreover, it is easy to see that $m_{\mathcal{U}}$ is a state if  $\omega$ is a state.
\end{remark}

Let $\pi: {\mathcal T}_\triangleright({\Bbb G})\rightarrow L^1({\Bbb G})$ be the  quotient map. Then the map 
$$
\pi^{**}: {\mathcal T}_\triangleright({\Bbb G})^{**}\rightarrow L^1({\Bbb G})^{**}
$$ 
is a surjective algebra homomorphism,  where ${\mathcal T}_\triangleright({\Bbb G})^{**}$ and $L^1({\Bbb G})^{**}$ are equipped with their left Arens products. 
\begin{lemma}\label{th00}
	Let $m\in{\mathcal B}\left(L^{2}(\mathbb{G})\right)^*$. Then $L_m|_{L^\infty({\Bbb G})}=L_{\pi^{**}(m)}$, where $L_{\pi^{**}(m)}: L^\infty({\Bbb G})\rightarrow L^\infty({\Bbb G})$ is defined by $L_{\pi^{**}(m)}(x)=\pi^{**}(m)\square x$ for all $x\in L^\infty({\Bbb G})$.
\end{lemma}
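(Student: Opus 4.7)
The plan is to fix $x\in L^{\infty}(\mathbb{G})$, first upgrade $L_m(x)$ to an element of $L^{\infty}(\mathbb{G})$, and then check that inside $L^{\infty}(\mathbb{G})$ it coincides with $\pi^{**}(m)\,\square\, x$ by pairing against an arbitrary $f\in L^{1}(\mathbb{G})$. The whole argument is transport of structure along the completely contractive algebra homomorphism $\pi$.

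For the first step I would simply unfold the definitions. For $\omega\in \mathcal{T}_\triangleright(\mathbb{G})$ one has
$$\langle L_m(x),\omega\rangle = \langle m, x\triangleright\omega\rangle = \langle m, x\star\pi(\omega)\rangle,$$
using the identity $x\triangleright\omega = x\star\pi(\omega)$ recorded in the preliminaries. The right-hand side depends on $\omega$ only through $\pi(\omega)$, so $L_m(x)$ annihilates $\ker\pi = L^{\infty}(\mathbb{G})_\perp$, and bipolar forces $L_m(x)\in \bigl(L^{\infty}(\mathbb{G})_\perp\bigr)^\perp = L^{\infty}(\mathbb{G})$.

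For the second step, since $\pi$ is surjective I can pick for each $f\in L^{1}(\mathbb{G})$ some $\omega_f\in \mathcal{T}_\triangleright(\mathbb{G})$ with $\pi(\omega_f)=f$, which gives $\langle L_m(x), f\rangle = \langle m, x\star f\rangle$. On the other side, the canonical $L^{1}(\mathbb{G})$-module structure on $L^{\infty}(\mathbb{G})$ is exactly $x\cdot f = x\star f$ (a one-line check from $\langle x\cdot f, g\rangle = \langle x, f\star g\rangle = \langle (f\otimes \iota)\Gamma(x), g\rangle$), so by definition of the left Arens product on $L^{1}(\mathbb{G})^{**}$,
$$\langle L_{\pi^{**}(m)}(x), f\rangle = \langle \pi^{**}(m)\,\square\, x, f\rangle = \langle \pi^{**}(m), x\star f\rangle = \langle m, x\star f\rangle,$$
where the last equality uses that $\pi^{*}\colon L^{\infty}(\mathbb{G})\to \mathcal{B}(L^{2}(\mathbb{G}))$ is simply the standard inclusion (being the adjoint of the restriction map $\pi$). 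The two expressions agree, giving the lemma.

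There is no serious obstacle: the argument is bookkeeping that transfers the Arens action of $\mathcal{T}_\triangleright(\mathbb{G})^{**}$ on $L^{\infty}(\mathbb{G})\subseteq \mathcal{B}(L^{2}(\mathbb{G}))$ down to the corresponding action of $L^{1}(\mathbb{G})^{**}$ along $\pi^{**}$. The one thing to keep straight is the correspondence between the abstract module notation $x\cdot f$ used in the Arens formalism and the concrete conventions $x\star f$, $x\triangleright\omega$ from the preliminaries; once that dictionary is in place, both sides collapse to the same pairing $\langle m, x\star f\rangle$.
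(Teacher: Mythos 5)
Your proof is correct, and it reaches the conclusion by a slightly different mechanism than the paper. The paper approximates $m$ by a net $(\omega_i)$ from $\mathcal{T}_\triangleright(\mathbb{G})$ (Goldstine), uses the identity $\omega_i\triangleright x=\pi(\omega_i)\star x$, and passes to weak$^*$ limits on both sides, invoking weak$^*$-closedness of $L^\infty(\mathbb{G})$ in $\mathcal{B}(L^2(\mathbb{G}))$ to identify the two limits. You instead work entirely at the level of dual pairings: the identity $x\triangleright\omega=x\star\pi(\omega)$ shows $\langle L_m(x),\omega\rangle$ factors through $\pi(\omega)$, so $L_m(x)$ annihilates $L^\infty(\mathbb{G})_\perp$ and lands in $(L^\infty(\mathbb{G})_\perp)^\perp=L^\infty(\mathbb{G})$ by the bipolar theorem (which is where weak$^*$-closedness of $L^\infty(\mathbb{G})$ enters for you), and then both $\langle L_m(x),f\rangle$ and $\langle \pi^{**}(m)\,\square\,x,f\rangle$ collapse to $\langle m,x\star f\rangle$ once you check $x\cdot f=x\star f$ and $\pi^*=$ inclusion. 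The two arguments are of comparable length; yours avoids the net and the (slightly glossed-over) comparison of two weak$^*$ topologies, at the cost of explicitly verifying the Arens-product bookkeeping that the paper leaves implicit. Both are sound.
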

\begin{proof}
	Let $m\in{\mathcal B}\left(L^{2}(\mathbb{G})\right)^*$  and  $x\in L^\infty({\Bbb G})$. Take a net $(\omega_i)$ in ${\mathcal T}_\triangleright({\Bbb G})$ such that 
	$\omega_i\rightarrow m$ in the weak$^*$-topology of ${\mathcal B}\left(L^{2}(\mathbb{G})\right)^*$. 
	Then $\omega_i \triangleright x\rightarrow m\square x$ 
	in the weak$^*$-topology of ${\mathcal B}(L^{2}(\mathbb{G}))$. On the other hand, $\omega_i \triangleright x=\pi(\omega_i)\star x\in L^\infty({\Bbb G})$ and $\pi(\omega_i)\star x\rightarrow \pi^{**}(m)\square x$ in the weak$^*$-topology of $L^\infty({\Bbb G})$. Since $L^\infty({\Bbb G})$
	is weak$^*$-closed in ${\mathcal B}(L^{2}(\mathbb{G}))$ we obtain that $m\square x=\pi^{**}(m)\square x$. This shows that
	$L_m|_{L^\infty({\Bbb G})}=L_{\pi^{**}(m)}$.
\end{proof}

The following result is an immediate consequence of the above lemma.
\begin{corollary}\label{cor00}
	Let $m\in{\mathcal B}\left(L^{2}(\mathbb{G})\right)^*$. Then $\widetilde{\mathcal H}_m\cap L^\infty({\Bbb G})=\mathcal{H}_{\pi^{**}(m)}$, where $\mathcal{H}_{\pi^{**}(m)}=\{x\in L^\infty({\Bbb G}): L_{\pi^{**}(m)}(x)=x\}$
\end{corollary}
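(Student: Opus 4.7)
The plan is to deduce the equality of the two fixed-point sets directly from Lemma \ref{th00}, which already does the substantive work by identifying the restriction of $L_m$ to $L^\infty(\mathbb{G})$ with $L_{\pi^{**}(m)}$. The only thing left is to chase the fixed-point equations through this identification, so the argument is essentially a one-line verification.

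Concretely, I would take an arbitrary $x\in L^\infty(\mathbb{G})$ and show the two equivalent descriptions of membership. First, $x\in \widetilde{\mathcal H}_m\cap L^\infty(\mathbb{G})$ means exactly that $L_m(x)=x$. By Lemma \ref{th00}, $L_m|_{L^\infty(\mathbb{G})}=L_{\pi^{**}(m)}$, so $L_m(x)=L_{\pi^{**}(m)}(x)$. Hence the fixed-point condition $L_m(x)=x$ is equivalent to $L_{\pi^{**}(m)}(x)=x$, which is precisely the condition $x\in \mathcal{H}_{\pi^{**}(m)}$. This establishes the inclusion $\widetilde{\mathcal H}_m\cap L^\infty(\mathbb{G})\subseteq \mathcal{H}_{\pi^{**}(m)}$ and the reverse inclusion simultaneously.

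One small point worth being explicit about is that $\mathcal{H}_{\pi^{**}(m)}$ lives inside $L^\infty(\mathbb{G})$ by definition, so no ambient-space confusion arises: both sides are already subsets of $L^\infty(\mathbb{G})$ and agree pointwise. Since Lemma \ref{th00} does all the heavy lifting (pulling the weak${}^*$-limit argument through the inclusion $L^\infty(\mathbb{G})\hookrightarrow {\mathcal B}(L^2(\mathbb{G}))$ and using weak${}^*$-closedness of $L^\infty(\mathbb{G})$), there is really no obstacle here; the corollary is a bookkeeping consequence rather than a separate theorem. The only conceivable issue would be ensuring that the two module-action conventions match, but this is immediate from the identity $\omega\triangleright x=\pi(\omega)\star x$ for $\omega\in \mathcal{T}_\triangleright(\mathbb{G})$ and $x\in L^\infty(\mathbb{G})$ recorded in Section \ref{sec2}, which was the basis for the lemma itself.
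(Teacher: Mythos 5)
Your argument is correct and matches the paper exactly: the paper states this corollary as an immediate consequence of Lemma \ref{th00}, and your fixed-point chase through the identity $L_m|_{L^\infty({\Bbb G})}=L_{\pi^{**}(m)}$ is precisely that deduction.
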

\begin{lemma}\label{th0}
	Let $\omega\in \mathcal{T}_\triangleright({\Bbb G})$ with $\|\omega\|=1$ and let $\mathcal{U}$ be a free ultrafilter on ${\Bbb N}$. Then the map $L_{m_{\mathcal{U}}}$  is a contractive projection from ${\mathcal B}(L^2({\Bbb G}))$ onto $\widetilde{\mathcal H}_{m_{\mathcal{U}}}=\widetilde{\mathcal H}_\omega$.	
\end{lemma}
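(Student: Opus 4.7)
The plan is to combine the algebra-homomorphism property of $\Phi$ with the idempotency of $m_{\mathcal{U}}$ already established in Remark~\ref{th2}. First I would note that by that Remark $m_{\mathcal{U}} \square m_{\mathcal{U}} = m_{\mathcal{U}}$ and $\|m_{\mathcal{U}}\| \leq 1$, so $m_{\mathcal{U}}$ is either zero or a contractive idempotent in $({\mathcal B}(L^2({\mathbb G}))^*, \square)$. Since $\Phi : m \mapsto L_m$ is a contractive algebra homomorphism, it follows at once that $L_{m_{\mathcal{U}}} \circ L_{m_{\mathcal{U}}} = L_{m_{\mathcal{U}} \square m_{\mathcal{U}}} = L_{m_{\mathcal{U}}}$ and $\|L_{m_{\mathcal{U}}}\| \leq 1$, which makes $L_{m_{\mathcal{U}}}$ a contractive projection on ${\mathcal B}(L^2({\mathbb G}))$ with image equal to its fixed-point set $\widetilde{\mathcal H}_{m_{\mathcal{U}}}$.

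It then remains to identify $\widetilde{\mathcal H}_{m_{\mathcal{U}}}$ with $\widetilde{\mathcal H}_\omega$. For the inclusion $\widetilde{\mathcal H}_\omega \subseteq \widetilde{\mathcal H}_{m_{\mathcal{U}}}$ I would argue by weak$^*$ approximation: if $\omega \triangleright x = x$, a straightforward induction yields $\omega^k \triangleright x = x$ for all $k \geq 1$, and hence $\omega_n \triangleright x = x$ for every $n$. Using the weak$^*$-weak$^*$ continuity of $\Phi$ (which guarantees that $m \mapsto m \square x$ is weak$^*$-weak$^*$ continuous into ${\mathcal B}(L^2({\mathbb G}))$), passing to the limit along $\mathcal{U}$ gives $m_{\mathcal{U}} \square x = x$, so $x \in \widetilde{\mathcal H}_{m_{\mathcal{U}}}$.

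For the reverse inclusion $\widetilde{\mathcal H}_{m_{\mathcal{U}}} \subseteq \widetilde{\mathcal H}_\omega$, I would exploit the associativity of the left Arens product together with the identity $\omega \square m_{\mathcal{U}} = \omega \triangleright m_{\mathcal{U}} = m_{\mathcal{U}}$ supplied by Remark~\ref{th2}. Indeed, if $m_{\mathcal{U}} \square x = x$, then
\[
\omega \square x \;=\; \omega \square (m_{\mathcal{U}} \square x) \;=\; (\omega \square m_{\mathcal{U}}) \square x \;=\; m_{\mathcal{U}} \square x \;=\; x,
\]
so $x \in \widetilde{\mathcal H}_\omega$. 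I expect this reverse inclusion to be the one place where some care is needed, because it is the only step that uses essentially the fact that $m_{\mathcal{U}}$ is a two-sided $\omega$-fixed point, whereas the rest of the argument is formal bookkeeping with a weak$^*$-continuous contractive homomorphism.
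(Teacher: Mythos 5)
Your proposal is correct and follows essentially the same route as the paper: the projection and contractivity come from the idempotency of $m_{\mathcal{U}}$ together with the contractive homomorphism $\Phi$, the inclusion $\widetilde{\mathcal H}_\omega\subseteq\widetilde{\mathcal H}_{m_{\mathcal{U}}}$ comes from $\omega_n\triangleright x=x$ and passing to the weak$^*$ limit along $\mathcal{U}$, and the reverse inclusion uses $\omega\square m_{\mathcal{U}}=m_{\mathcal{U}}$ exactly as the paper's computation $L_\omega(L_{m_{\mathcal{U}}}(x))=L_{\omega\triangleright m_{\mathcal{U}}}(x)=x$. No gaps.
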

\begin{proof}
	It suffice to show that $\widetilde{\mathcal H}_{m_{\mathcal{U}}}=\widetilde{\mathcal H}_\omega$. To prove this, let $x\in {\mathcal B}(L^2({\Bbb G}))$. Then
	$ L_{m_{\mathcal{U}}}(x)=w^*-\lim_{\mathcal U} L_{\omega_n}(x)$ in the weak$^*$ topology of ${\mathcal B}(L^2({\Bbb G}))$. This shows that $\widetilde{\mathcal H}_\omega\subseteq\widetilde{\mathcal H}_{m_{\mathcal{U}}}$. To prove the converse inclusion, given $x\in \widetilde{\mathcal H}_{m_{\mathcal{U}}}$,  we have 
	$$
	L_\omega(x)=L_\omega(L_{m_{\mathcal{U}}}(x))=L_{\omega\triangleright m_{\mathcal{U}}}(x)=L_{m_{\mathcal{U}}}(x)=x,
	$$
	and this completes the proof.
\end{proof}

\begin{theorem}
	Let $\omega\in \mathcal{T}_\triangleright({\Bbb G})$ with $\|\omega\|=1$.  Then the following statements are equivalent.
	
	{\rm (i)} $\widetilde{\mathcal H}_\omega=\{0\}$.
	
	{\rm (ii)} $\widetilde{\mathcal H}_\omega\cap LUC({\Bbb G})=\{0\}$.
	
	{\rm (iii)} $\omega_n\rightarrow 0$ weak$^*$ in $LUC({\Bbb G})^*$.
	
	{\rm (iv)} $m_{\mathcal{U}}|_{LUC({\Bbb G})}=0$ for all free ultra filter $\mathcal{U}$.
	
	{\rm (v)} $m_{\mathcal{U}}|_{LUC({\Bbb G})}=0$ for some free ultra filter $\mathcal{U}$.	
\end{theorem}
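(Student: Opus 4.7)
The plan is to establish the cycle (i)$\Rightarrow$(ii)$\Rightarrow$(iii)$\Rightarrow$(iv)$\Rightarrow$(v)$\Rightarrow$(i). The implications (i)$\Rightarrow$(ii) and (iv)$\Rightarrow$(v) are trivial. For (iii)$\Rightarrow$(iv), since $m_{\mathcal U}$ is by definition the weak$^*$-limit of $\omega_n$ along $\mathcal U$, the hypothesis $\langle\omega_n,x\rangle\to 0$ for all $x\in LUC({\Bbb G})$ gives $\langle m_{\mathcal U},x\rangle = 0$ for every such $x$ and every $\mathcal U$.

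For (v)$\Rightarrow$(i), I would fix $\mathcal U$ with $m_{\mathcal U}|_{LUC({\Bbb G})}=0$ and take $x\in\widetilde{\mathcal H}_\omega$. Lemma \ref{th0} yields $x = L_{m_{\mathcal U}}(x)=m_{\mathcal U}\square x$, so for any $\tau\in\mathcal T_\triangleright({\Bbb G})$,
\[
\langle x,\tau\rangle = \langle m_{\mathcal U}\square x,\tau\rangle = \langle m_{\mathcal U},x\triangleright\tau\rangle.
\]
Since $x\triangleright\tau\in\langle{\mathcal B}(L^2({\Bbb G}))\triangleright\mathcal T_\triangleright({\Bbb G})\rangle = LUC({\Bbb G})$ (recalled in Section~\ref{sec2}), the right-hand side vanishes, so $x=0$.

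The main obstacle is (ii)$\Rightarrow$(iii), which I would prove by contrapositive. If (iii) fails there exist $x_0\in LUC({\Bbb G})$ and $\epsilon>0$ with $|\langle\omega_n,x_0\rangle|\geq\epsilon$ on an infinite set $A\subseteq{\Bbb N}$; picking a free ultrafilter $\mathcal U$ containing $A$ forces $\langle m_{\mathcal U},x_0\rangle\neq 0$. Set $y:=L_{m_{\mathcal U}}(x_0)$, which lies in $\widetilde{\mathcal H}_\omega$ by Lemma \ref{th0}. The idempotency $m_{\mathcal U}\square m_{\mathcal U}=m_{\mathcal U}$ from Remark \ref{th2} yields
\[
\langle m_{\mathcal U},y\rangle = \langle m_{\mathcal U}\square m_{\mathcal U},x_0\rangle = \langle m_{\mathcal U},x_0\rangle \neq 0,
\]
so $y\neq 0$. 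To push $y$ into $LUC({\Bbb G})$, I would exploit that $L_{m_{\mathcal U}}$ is a right $\mathcal T_\triangleright({\Bbb G})$-module map, so each $y\triangleright\tau$ remains harmonic and automatically lies in $LUC({\Bbb G})$. If all $y\triangleright\tau$ vanished, then $\langle y,\tau\triangleright\sigma\rangle=0$ for every $\tau,\sigma$, and the factorization $\langle\mathcal T_\triangleright({\Bbb G})\triangleright\mathcal T_\triangleright({\Bbb G})\rangle=\mathcal T_\triangleright({\Bbb G})$ would give $y=0$, a contradiction. Therefore some $y\triangleright\tau$ is a nonzero element of $\widetilde{\mathcal H}_\omega\cap LUC({\Bbb G})$, contradicting (ii).

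The technical heart is the combination of the idempotency of $m_{\mathcal U}$ (which prevents the projection $L_{m_{\mathcal U}}$ from annihilating a functional that $m_{\mathcal U}$ does not kill) with the right-module structure (which produces harmonic operators inside $LUC({\Bbb G})$ from any nonzero harmonic element). I expect everything else to be routine weak$^*$ bookkeeping.
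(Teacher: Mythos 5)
Your proof is correct and rests on the same ingredients as the paper's --- the factorization $LUC({\Bbb G})=\langle{\mathcal B}(L^2({\Bbb G}))\triangleright\mathcal T_\triangleright({\Bbb G})\rangle$, the idempotency of $m_{\mathcal U}$ from Remark \ref{th2}, and the right-module property that pushes a non-zero harmonic operator into $LUC({\Bbb G})$ --- only with the implications arranged into the single cycle (i)$\Rightarrow$(ii)$\Rightarrow$(iii)$\Rightarrow$(iv)$\Rightarrow$(v)$\Rightarrow$(i) instead of the paper's grouping (ii)$\Leftrightarrow$(i), (i)$\Rightarrow$(iv), (v)$\Rightarrow$(ii), (iii)$\Leftrightarrow$(iv). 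Your one-step (v)$\Rightarrow$(i) via $\langle x,\tau\rangle=\langle m_{\mathcal U},x\triangleright\tau\rangle=0$ is a mild streamlining of the paper's two-step route through (ii), but there is no substantive difference.
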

\begin{proof}	
	(ii)$\Rightarrow$(i). Let $x\in\widetilde{\mathcal H}_\omega$ be non-zero. By definition, $\omega \triangleright x=x$. If $x\triangleright\gamma=0$ for all $\gamma\in \mathcal{T}_\triangleright({\mathbb G})$, then
	$$
	\langle x, \gamma\rangle=\langle \omega \triangleright x, \gamma\rangle=\langle x\triangleright \gamma, \omega \rangle=0.
	$$
	So, $x=0$ contradiction. Thus, there is some $\gamma\in \mathcal{T}_\triangleright({\mathbb G})$  such that $y:=x\triangleright \gamma \neq 0$. Moreover, it is clear that $y\in \widetilde{\mathcal H}_\omega\cap LUC({\Bbb G})$.
	
	The implications	(i)$\Rightarrow$(ii) and  (iii)$\Leftrightarrow$(iv)$\Rightarrow$(v) are trivial.
	
	(i)$\Rightarrow$(iv). Suppose that $m_{\mathcal{U}}|_{LUC({\Bbb G})}\neq 0$ for some free ultra filter $\mathcal{U}$. Then since  $\langle {\mathcal B}(L^2({\mathbb G}))\triangleright\mathcal{T}_\triangleright({\mathbb G})\rangle=LUC({\mathbb G})$, there is $x\in {\mathcal B}(L^2({\Bbb G}))$ such that 
	$L_{m_{\mathcal{U}}}(x)\neq 0$. On the other hand, $L_\omega(L_{m_{\mathcal{U}}}(x))=L_{\omega\triangleright m_{\mathcal{U}}}(x)=L_{m_{\mathcal{U}}}(x)\neq 0$. This shows that $L_{m_{\mathcal{U}}}(x)\in \widetilde{\mathcal H}_\omega$.
	
	(v)$\Rightarrow$(ii). Let $x\in\widetilde{\mathcal H}_\omega\cap LUC({\Bbb G})$ be non-zero. Then $\omega_n\triangleright x=x$ for all $n\in {\Bbb N}$. Now, given  $\gamma\in \mathcal{T}_\triangleright({\mathbb G})$ with $\langle x, \gamma\rangle \neq 0$, we have
	$$
	\langle m_{\mathcal{U}}, x\triangleright\gamma\rangle=w^*-\lim_{\mathcal U}\langle \omega_n, x\triangleright\gamma\rangle=w^*-\lim_{\mathcal U}\langle \omega_n\triangleright x, \gamma\rangle=\langle x, \gamma\rangle\neq 0.
	$$
	This shows that $m_{\mathcal{U}}|_{LUC({\Bbb G})}\neq 0$.
\end{proof}


Let $\omega\in S(\mathcal{T}_\triangleright({\Bbb G}))$. Then the operator $L_\omega$ is a Markov operator, i.e., a unital normal completely positive map, on ${\mathcal B}(L^2({\Bbb G}))$. Although $\widetilde{\mathcal H}_\omega$ is not an algebra in general, it is easy to see that it is a
weak$^*$-closed operator system (i.e. a unital and self-adjoint closed subspace)  in ${\mathcal B}(L^2({\Bbb G}))$. However, we can introduce a new product in $\widetilde{\mathcal H}_\omega$ so that it
becomes a von Neumann algebra. Let us recall this construction
for the convenience of the reader; see \cite[Sec. 2.5]{izu}.

We fix  a free ultrafilter ${\mathcal U}$ on ${\Bbb N}$. Then $L_{m_{\mathcal{U}}}$ is a projection of norm $1$ from ${\mathcal B}(L^2({\Bbb G}))$ onto $\widetilde{\mathcal H}_\omega$ and
the Choi-Effros product $x\bullet y:=L_{m_{\mathcal{U}}}(xy)$ defines a von Neumann algebra product on  $\widetilde{\mathcal H}_\omega$, different, of course, from the one in ${\mathcal B}(L^2(\mathbb{G}))$. 
Let us stress that the von Neumann algebra  structure
of $\widetilde{\mathcal H}_\omega$ does not depend on the choice of the free ultrafilter ${\mathcal U}$ since every completely positive isometric linear isomorphism between two von Neumann algebras is a $*$-isomorphism.


\begin{lemma}\label{lem2}
	Let $x\in{\mathcal B}(L^{2}(\mathbb{G}))$ and $\widetilde{\Gamma}(x)\in{\mathcal B}(L^{2}(\mathbb{G})) {\otimes}1$.  Then $x\in L^\infty(\widehat{\mathbb{G}})$.
\end{lemma}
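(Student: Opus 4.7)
The plan is to introduce $y\in{\mathcal B}(L^2({\mathbb G}))$ with $\widetilde{\Gamma}(x)=y\otimes 1$ and bootstrap the hypothesis onto $y$, using co-associativity of $\widetilde{\Gamma}$. Once $\widetilde{\Gamma}(y)=y\otimes 1$ is established, a slicing argument puts $y$ in $L^\infty(\widehat{\mathbb{G}})$, and injectivity of $\widetilde{\Gamma}$ forces $x=y$.

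First I would apply the co-associativity identity $(\widetilde{\Gamma}\otimes \iota)\widetilde{\Gamma}=(\iota\otimes\Gamma)\widetilde{\Gamma}$ to both sides of $\widetilde{\Gamma}(x)=y\otimes 1$. The left-hand side becomes
$$(\widetilde{\Gamma}\otimes \iota)(y\otimes 1)=\widetilde{\Gamma}(y)\otimes 1,$$
while the right-hand side becomes $(\iota\otimes \Gamma)(y\otimes 1)=y\otimes \Gamma(1)=y\otimes 1\otimes 1$. Comparing factors in $\mathcal{B}(L^2(\mathbb{G}))\bar{\otimes}L^\infty(\mathbb{G})\bar{\otimes} L^\infty(\mathbb{G})$ yields $\widetilde{\Gamma}(y)=y\otimes 1$.

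Next I would unpack this as $V(y\otimes 1)V^*=y\otimes 1$, i.e., $V(y\otimes 1)=(y\otimes 1)V$, and apply the slice map $\iota\otimes f$ for an arbitrary $f\in L^1({\mathbb G})$. Since $\rho(f)=(\iota\otimes f)(V)$, the two sides reduce respectively to $\rho(f)\,y$ and $y\,\rho(f)$, so
$$\rho(f)\,y=y\,\rho(f)\qquad \bigl(f\in L^1({\mathbb G})\bigr).$$
Therefore $y$ lies in the commutant of $\{\rho(f):f\in L^1({\mathbb G})\}$, which coincides (by the double-commutant theorem) with $L^\infty(\widehat{\mathbb{G}}')'=L^\infty(\widehat{\mathbb{G}})$, using the identity $L^\infty(\widehat{\mathbb{G}})'=L^\infty(\widehat{\mathbb{G}}')$ recalled in Section~\ref{sec2}.

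Finally, having both $\widetilde{\Gamma}(x)=y\otimes 1$ and $\widetilde{\Gamma}(y)=y\otimes 1$ (the latter from Step 1), I would invoke injectivity of $\widetilde{\Gamma}$, which is immediate from the unitarity of $V$: if $V(a\otimes 1)V^*=0$ then $a\otimes 1=0$ and hence $a=0$. This gives $x=y\in L^\infty(\widehat{\mathbb{G}})$. The principal obstacle, conceptually, is that slicing the original identity $V(x\otimes 1)V^*=y\otimes 1$ directly only produces the twisted intertwining $\rho(f)x=y\rho(f)$, which does not lend itself to cancellation; the co-associativity trick is what converts this into a genuine commutation relation by replacing $x$ with $y$ on the left as well, and only then does the commutant argument succeed.
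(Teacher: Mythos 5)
Your proof is correct, but it takes a genuinely different route from the paper's. The paper slices the original identity $V(x\otimes 1)=(y\otimes 1)V$ to get the same twisted intertwining relation $\rho(f)x=y\rho(f)$ that you obtain, and then cancels \emph{directly}: the set of $a\in L^\infty(\widehat{\mathbb{G}}')$ with $ax=ya$ is weak$^*$ closed and contains the weak$^*$-dense subalgebra $\rho(L^1(\mathbb{G}))$, hence all of $L^\infty(\widehat{\mathbb{G}}')$; taking $a=1$ gives $x=y$, and then the relation becomes a genuine commutation relation placing $x$ in $L^\infty(\widehat{\mathbb{G}}')'=L^\infty(\widehat{\mathbb{G}})$. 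So your closing worry that the twisted intertwining ``does not lend itself to cancellation'' is unfounded --- the unit of $L^\infty(\widehat{\mathbb{G}}')$ sits in the weak$^*$ closure of $\rho(L^1(\mathbb{G}))$, and that is all the cancellation one needs. Your detour through co-associativity $(\widetilde{\Gamma}\otimes\iota)\widetilde{\Gamma}=(\iota\otimes\Gamma)\widetilde{\Gamma}$ (which does hold, via the pentagon relation for $V$, and is what the paper means by calling $\widetilde{\Gamma}$ co-associative) is a clean alternative: it converts the hypothesis into the untwisted identity $\widetilde{\Gamma}(y)=y\otimes 1$, after which the commutant argument applies to $y$ and injectivity of $\widetilde{\Gamma}$ recovers $x=y$. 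What your version buys is that it avoids any appeal to weak$^*$ approximation of the identity; what it costs is the extra co-associativity computation and an extra application of the slice-map/commutant step. Both arguments are complete and correct.
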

\begin{proof}
	Suppose that	
	$\widetilde{\Gamma}(x)=y\otimes 1$ for some $y\in{\mathcal B}(L^{2}(\mathbb{G}))$. Then $\widetilde{\Gamma}(x)=V(x\otimes 1)V^*=y\otimes 1$, and so  $V(x\otimes 1)=(y\otimes 1)V$. Using  the slice map $(\iota\otimes f)$ to both sides of this equation, we obtain  $\rho(f)x=y\rho(f)$ for all $f\in L^1({\Bbb G})$. Therefore, we have  
	$x=y\in L^\infty(\widehat{\mathbb{G}})$ since $\rho(L^1({\Bbb G}))$ is weak$^*$-dense in $L^\infty(\widehat{\mathbb{G}}')$ and $L^\infty(\widehat{\mathbb{G}})=L^\infty(\widehat{\mathbb{G}}')'$.	
\end{proof}

\begin{theorem}\label{on}
	Let $\omega\in \mathcal{T}_\triangleright({\Bbb G})$ be a state. Then the following statements are equivalent.
	
	{\rm (i)}   ${\mathcal H}_{\pi(\omega)}={\Bbb C}1$.
	
	{\rm (ii)} $\widetilde{\mathcal H}_\omega=L^\infty(\widehat{\Bbb G})$.
\end{theorem}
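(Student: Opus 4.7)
The plan is to obtain (ii)$\Rightarrow$(i) immediately from Corollary \ref{cor00} and the standard fact that $L^\infty({\Bbb G})\cap L^\infty(\widehat{\Bbb G})={\Bbb C}1$, and to focus on the converse direction. For the easy direction, since the canonical embedding $\mathcal{T}_\triangleright({\Bbb G})\hookrightarrow \mathcal{T}_\triangleright({\Bbb G})^{**}$ makes $\pi^{**}(\omega)=\pi(\omega)$ for $\omega\in\mathcal{T}_\triangleright({\Bbb G})$, Corollary \ref{cor00} yields ${\mathcal H}_{\pi(\omega)}=\widetilde{\mathcal H}_\omega\cap L^\infty({\Bbb G})$, which under (ii) collapses to $L^\infty(\widehat{\Bbb G})\cap L^\infty({\Bbb G})={\Bbb C}1$.

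For (i)$\Rightarrow$(ii), the inclusion $L^\infty(\widehat{\Bbb G})\subseteq\widetilde{\mathcal H}_\omega$ is already recorded in the paper (from $\omega\triangleright\hat{x}=\langle\omega,1\rangle\hat{x}=\hat{x}$ when $\omega$ is a state), so I only need the reverse. Given $x\in \widetilde{\mathcal H}_\omega$ and $\tau\in \mathcal{T}_\triangleright({\Bbb G})$, I consider $y:=x\triangleright\tau=(\tau\otimes\iota)(\widetilde{\Gamma}(x))$, which automatically lies in $L^\infty({\Bbb G})$ because $\widetilde{\Gamma}(x)\in {\mathcal B}(L^2({\Bbb G}))\bar{\otimes}L^\infty({\Bbb G})$. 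By the $\mathcal{T}_\triangleright({\Bbb G})$-bimodule associativity on ${\mathcal B}(L^2({\Bbb G}))$ — itself a formal consequence of the extended co-associativity $(\widetilde{\Gamma}\otimes\iota)\widetilde{\Gamma}=(\iota\otimes\Gamma)\widetilde{\Gamma}$ coming from the pentagon equation for $V$ — one computes
\[
\omega\triangleright y=\omega\triangleright(x\triangleright\tau)=(\omega\triangleright x)\triangleright\tau=x\triangleright\tau=y,
\]
so $y\in \widetilde{\mathcal H}_\omega\cap L^\infty({\Bbb G})=\mathcal{H}_{\pi(\omega)}$. Hypothesis (i) then forces $x\triangleright\tau\in{\Bbb C}1$ for every $\tau\in \mathcal{T}_\triangleright({\Bbb G})$.

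To close, I would rephrase the conclusion $(\tau\otimes\iota)(\widetilde{\Gamma}(x))\in{\Bbb C}1$ for all $\tau$ as $\widetilde{\Gamma}(x)\in {\mathcal B}(L^2({\Bbb G}))\otimes 1$ and invoke Lemma \ref{lem2} to conclude $x\in L^\infty(\widehat{\Bbb G})$. The rephrasing is a short duality step: the preannihilator of ${\mathcal B}(L^2({\Bbb G}))\bar{\otimes}{\Bbb C}1$ inside $\mathcal{T}(L^2({\Bbb G}))\widehat{\otimes}L^1({\Bbb G})$ equals the closure of $\mathcal{T}(L^2({\Bbb G}))\otimes\{g\in L^1({\Bbb G}):\langle g,1\rangle=0\}$, and the scalar-slice condition says exactly that $\widetilde{\Gamma}(x)$ annihilates this preannihilator. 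The one point where I expect to need genuine care is the bimodule associativity used in the middle step; beyond that, the argument is a straightforward chase through the definitions.
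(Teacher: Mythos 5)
Your proof is correct and follows essentially the same route as the paper's: both directions hinge on the identity $\widetilde{\mathcal H}_\omega\cap L^\infty({\Bbb G})={\mathcal H}_{\pi(\omega)}$, and for (i)$\Rightarrow$(ii) the key step is exactly the paper's, namely that $x\triangleright\tau$ lies in ${\mathcal H}_{\pi(\omega)}={\Bbb C}1$ for every $\tau$, whence $\widetilde{\Gamma}(x)\in{\mathcal B}(L^2({\Bbb G}))\otimes 1$ and Lemma \ref{lem2} applies. The only cosmetic difference is that the paper identifies the scalar as $\langle x,\tau\rangle$ and verifies $\widetilde{\Gamma}(x)=x\otimes 1$ directly on elementary tensors, whereas you pass through a slice-map/duality argument; both are immediate.
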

\begin{proof}
	(i)$\Rightarrow$(ii). Suppose that ${\mathcal H}_{\pi(\omega)}={\Bbb C}1$.  Clearly, $L^\infty(\widehat{\Bbb G})\subseteq \widetilde{\mathcal H}_\omega$. To prove the converse inclusion, let us first suppose that $x\in \widetilde{\mathcal H}_\omega$. Then it is easy to see that $x\triangleright \gamma\in \widetilde{\mathcal H}_\omega\cap L^\infty({\Bbb G})={\mathcal H}_{\pi(\omega)}={\Bbb C}1$ for all $\gamma\in\mathcal{T}_\triangleright({\Bbb G})$. Thus there is a unique complex number  $C_x^\gamma$ such that $x\triangleright \gamma=C_x^\gamma 1$. On the other hand,
	$$
	\langle x, \gamma\rangle=\langle \omega \triangleright x, \gamma\rangle=\langle x\triangleright \gamma, \omega \rangle=\langle C_x^\gamma 1, \omega\rangle=C_x^\gamma.
	$$
	Therefore, for every $\gamma, \sigma\in \mathcal{T}_\triangleright({\mathbb G})$, we have 
	$$
	\langle \widetilde{\Gamma}(x), \gamma\otimes \sigma\rangle=\langle x, \gamma\triangleright \sigma\rangle=\langle x\triangleright \gamma, \sigma\rangle=\langle \gamma, x\rangle\langle \sigma, 1\rangle=\langle x\otimes 1, \gamma\otimes \sigma\rangle. 
	$$
	This shows that $\widetilde{\Gamma}(x)=x\otimes 1$, and so 
	$x\in L^\infty(\widehat{\mathbb{G}})$ by Lemma \ref{lem2}.  Hence, $\widetilde{\mathcal H}_\omega=L^\infty(\widehat{\Bbb G})$.
	The implication (ii)$\Rightarrow$(i) follows from these facts that ${\mathcal H}_{\pi(\omega)}=\widetilde{\mathcal H}_\omega\cap L^\infty({\Bbb G})$  and $L^\infty(\widehat{\Bbb G})\cap L^\infty({\Bbb G})={\Bbb C}1$.
\end{proof}

We call a state $\omega\in S(\mathcal{T}_\triangleright({\Bbb G}))$ non-degenerate if the state $\pi(\omega)$ in $L^1({\Bbb G})$ is non-degenerate
in the sense of \cite{knr}; that is, for every non-zero operator
$x\in C_0({\Bbb G})^+$ there exists $n\in{\Bbb N}$ such that $\langle\omega^n, x\rangle=\langle\pi(\omega)^n, x\rangle>0$.
We also recall that, a locally compact quantum
group ${\Bbb G}$ is said to be finite if $L^\infty({\Bbb G})$ is finite dimensional, which is equivalent
to  ${\Bbb G}$ being both compact and discrete.
\begin{corollary}
	Let $\omega\in S(\mathcal{T}_\triangleright({\Bbb G}))$ be non-degenerate.  Then  ${\Bbb G}$ is finite if and only if ${\mathcal H}_{\pi(\omega)}\cap {\mathcal K}(L^2({\Bbb G}))\neq\{0\}$.
\end{corollary}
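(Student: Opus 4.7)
The forward implication is immediate: if $\mathbb{G}$ is finite, then $L^2(\mathbb{G})$ is finite-dimensional, so $\mathcal{K}(L^2(\mathbb{G})) = \mathcal{B}(L^2(\mathbb{G}))$; since $\pi(\omega)$ is a state, $\pi(\omega)\star 1 = 1$ gives $1 \in \mathcal{H}_{\pi(\omega)} \cap \mathcal{K}(L^2(\mathbb{G}))$, which is therefore nonzero.

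For the converse, suppose $0 \neq x \in \mathcal{H}_{\pi(\omega)} \cap \mathcal{K}(L^2(\mathbb{G}))$. My plan is to establish $\mathcal{H}_{\pi(\omega)} = \mathbb{C}1$; granting this, the nonzero $x$ is a scalar multiple of the identity, so $1 \in \mathcal{K}(L^2(\mathbb{G}))$, forcing $L^2(\mathbb{G})$ to be finite-dimensional and then $L^\infty(\mathbb{G})$ (being faithfully represented) to be finite-dimensional as well, i.e., $\mathbb{G}$ is finite. Since $\pi(\omega)$ is non-degenerate, the rigidity from \cite{knr} recalled in the paragraph preceding the corollary reduces $\mathcal{H}_{\pi(\omega)} = \mathbb{C}1$ to showing that $\mathcal{H}_{\pi(\omega)}$ is a subalgebra of $L^\infty(\mathbb{G})$, which by Theorem~\ref{on} is equivalent to $\widetilde{\mathcal{H}}_\omega = L^\infty(\widehat{\mathbb{G}})$.

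To establish this I would invoke a quantum maximum principle. Replacing $x$ by the nonzero one of $(x+x^*)/2$ and $(x-x^*)/(2i)$, I may assume $x = x^*$ is a nonzero self-adjoint compact harmonic element with $\lambda := \|x\|$. The spectral projection $p := \chi_{\{\lambda^2\}}(x^2)$ is a nonzero finite-rank projection lying in $L^\infty(\mathbb{G})$, and the dyadic iterates $(x/\lambda)^{2^n}$ converge to $p$ in operator norm. Iterated application of the Kadison--Schwarz inequality for the normal unital completely positive map $L_{\pi(\omega)}$ yields $L_{\pi(\omega)}\bigl((x/\lambda)^{2^n}\bigr) \geq (x/\lambda)^{2^n}$, which passes to the norm limit as $L_{\pi(\omega)}(p) \geq p$. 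Left invariance of the Haar weight $\phi$ provides $\phi \circ L_{\pi(\omega)} = \phi$ on positives, and since $p$ is finite-rank we have $\phi(p) < \infty$, so faithfulness of $\phi$ upgrades $L_{\pi(\omega)}(p) \geq p$ to equality, placing $p \in \mathcal{H}_{\pi(\omega)}$.

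The main obstacle is promoting this single projection to the collapse $\mathcal{H}_{\pi(\omega)} = \mathbb{C}1$, since \cite{knr} characterizes triviality via the subalgebra property rather than via the existence of a projection. I expect this step to proceed by showing that $p$ lies in the multiplicative domain of $L_{\pi(\omega)}$ and, combined with the non-degeneracy of $\omega$, generates all of $\mathcal{H}_{\pi(\omega)}$ as a von Neumann subalgebra of $L^\infty(\mathbb{G})$, whence \cite{knr} forces triviality. Coordinating the compactness of $x$ (yielding the attained top eigenvalue with finite-rank spectral projection), the finiteness of $\phi(p)$ (upgrading Kadison--Schwarz to equality), and the non-degeneracy of $\omega$ (playing the role of irreducibility of the underlying quantum random walk) into a coherent argument at this final reduction is the technical heart of the proof.
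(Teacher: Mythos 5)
Your forward direction is fine, but the converse has two genuine problems. First, the iterated Kadison--Schwarz step is invalid: from $L_{\pi(\omega)}(y^{2^n})\geq y^{2^n}$ and $L_{\pi(\omega)}(y^{2^{n+1}})\geq L_{\pi(\omega)}(y^{2^n})^2$ you cannot conclude $L_{\pi(\omega)}(y^{2^{n+1}})\geq y^{2^{n+1}}$, because $a\geq b\geq 0$ does not imply $a^2\geq b^2$ for non-commuting positive operators (squaring is not operator monotone); the classical maximum-principle induction only survives in the commutative setting, and $L_{\pi(\omega)}(y^{2^n})$ has no reason to commute with $y^{2^n}$ here. Second, even granting a harmonic finite-rank projection $p$, the decisive step --- passing from that single projection to $\mathcal{H}_{\pi(\omega)}=\mathbb{C}1$ --- is exactly the part you leave as an expectation ("the technical heart"), and it is where the non-degeneracy hypothesis must actually do its work; as written the argument does not close. (There are also smaller issues: the invariance you need for $x\mapsto(\iota\otimes\pi(\omega))\Gamma(x)$ is that of the right Haar weight, not the left, and $\phi(p)<\infty$ for a finite-rank projection in $L^\infty(\mathbb{G})$ is not automatic for a weight.)

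For comparison, the paper's proof is a one-line citation: the equivalence is precisely \cite[Theorem 3.7]{knr} combined with the observation that $\mathbb{G}$ is finite if and only if $1\in\mathcal{K}(L^2(\mathbb{G}))$. The underlying mechanism in that theorem is not a maximum principle via Kadison--Schwarz but the same device used in the proof of Theorem \ref{com} (iii)$\Rightarrow$(i) of this paper: normalize a self-adjoint $x\in\mathcal{H}_{\pi(\omega)}\cap\mathcal{K}(L^2(\mathbb{G}))$ with $\|x\|=1$, pick a normal state $\mu$ attaining the norm, observe that the resulting positive element ($1-x$, resp.\ $1-x\triangleright\mu$) is annihilated by all powers $\omega^n$ because $x$ is harmonic, and invoke non-degeneracy (\cite[Lemma 3.3]{knr}) to force that element to vanish, giving $1\in\mathcal{K}(L^2(\mathbb{G}))$ directly without ever establishing $\mathcal{H}_{\pi(\omega)}=\mathbb{C}1$ first. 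If you want to salvage your approach, replacing the Kadison--Schwarz induction by this norm-attainment argument is the natural repair.
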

\begin{proof}
	This follows from \cite[Theorem 3.7]{knr} and the fact that ${\Bbb G}$ is finite if and only if $1\in {\mathcal K}(L^2({\Bbb G}))$.
\end{proof}
\begin{corollary}
	Let $\omega\in S(\mathcal{T}_\triangleright({\Bbb G}))$ be non-degenerate. Then the following statements are equivalent.
	
	{\rm (i)}   $\widetilde{\mathcal H}_\omega$ is a subalgebra of ${\mathcal B}(L^2({\Bbb G}))$.
	
	{\rm (ii)} $\widetilde{\mathcal H}_\omega=L^\infty(\widehat{\Bbb G})$.
\end{corollary}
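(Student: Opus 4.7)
The implication (ii)$\Rightarrow$(i) is trivial, since $L^\infty(\widehat{\mathbb G})$ is a von Neumann subalgebra of ${\mathcal B}(L^2(\mathbb G))$. So the content is in (i)$\Rightarrow$(ii), and my plan is to reduce it to the commutative harmonic case via Corollary \ref{cor00} and then invoke the subalgebra characterization of ${\mathcal H}_{\pi(\omega)}$ quoted from \cite{knr} in the introduction, before feeding the conclusion into Theorem \ref{on}.

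Concretely, assume $\widetilde{\mathcal H}_\omega$ is a subalgebra of ${\mathcal B}(L^2(\mathbb G))$ under its ambient product. Since $L^\infty(\mathbb G)$ is itself a subalgebra of ${\mathcal B}(L^2(\mathbb G))$, the intersection $\widetilde{\mathcal H}_\omega \cap L^\infty(\mathbb G)$ is a subalgebra of $L^\infty(\mathbb G)$ under the ambient product. By Corollary \ref{cor00} this intersection equals ${\mathcal H}_{\pi(\omega)}$, so ${\mathcal H}_{\pi(\omega)}$ is a subalgebra of $L^\infty(\mathbb G)$.

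Because $\omega$ is non-degenerate as a state in ${\mathcal T}_\triangleright(\mathbb G)$, by our definition $\pi(\omega)$ is a non-degenerate state in $L^1(\mathbb G)$. The result from \cite{knr} recalled in the introduction then applies: for a non-degenerate quantum probability measure $\mu$, the space ${\mathcal H}_\mu$ is a subalgebra of $L^\infty(\mathbb G)$ if and only if ${\mathcal H}_\mu={\mathbb C}1$. Applying this with $\mu=\pi(\omega)$ yields ${\mathcal H}_{\pi(\omega)}={\mathbb C}1$. Theorem \ref{on} then gives $\widetilde{\mathcal H}_\omega=L^\infty(\widehat{\mathbb G})$, completing the proof.

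The only real subtlety to watch is that the subalgebra hypothesis must descend correctly: $\widetilde{\mathcal H}_\omega$ is being considered with the ambient ${\mathcal B}(L^2(\mathbb G))$ product (not the Choi--Effros product $\bullet$), and ${\mathcal H}_{\pi(\omega)}$ inherits the ambient product of $L^\infty(\mathbb G)$, which matches the product used in \cite{knr}. This is automatic once one writes it out, and no new estimate is required, so I expect no serious obstacle beyond correctly citing Theorem \ref{on} and the non-degenerate characterization of \cite{knr}.
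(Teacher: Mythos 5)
Your proof is correct and follows essentially the same route as the paper: intersect $\widetilde{\mathcal H}_\omega$ with $L^\infty({\Bbb G})$ to get that ${\mathcal H}_{\pi(\omega)}$ is a subalgebra, invoke the non-degenerate characterization from \cite{knr} (the paper cites \cite[Theorem 3.6]{knr}) to conclude ${\mathcal H}_{\pi(\omega)}={\Bbb C}1$, and finish with Theorem \ref{on}. No gaps.
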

\begin{proof}
	(i)$\Rightarrow$(ii). Suppose that $\widetilde{\mathcal H}_\omega$ is a subalgebra of ${\mathcal B}(L^2({\Bbb G}))$. Then ${\mathcal H}_{\pi(\omega)}=\widetilde{\mathcal H}_\omega\cap L^\infty({\Bbb G})$ is a subalgebra of $L^\infty({\Bbb G})$.
	On the other hand, by \cite[Theorem 3.6]{knr} and  non-degeneracy of $\omega$ we have ${\mathcal H}_{\pi(\omega)}={\Bbb C}1$.  By Theorem \ref{on}, we conclude that  $\widetilde{\mathcal H}_\omega=L^\infty(\widehat{\Bbb G})$. As the implication (ii)$\Rightarrow$(i) is trivial,
	we are done.
\end{proof}
\begin{theorem}\label{com}
	Let $\omega\in S(\mathcal{T}_\triangleright({\Bbb G}))$ be non-degenerate.  Then the following statements are equivalent. 
	
	{\rm (i)}  ${\Bbb G}$ is compact.
	
	{\rm (ii)} $\widetilde{\mathcal H}_\omega\cap C_0({\Bbb G})\neq\{0\}$.
	
	{\rm (iii)} $\widetilde{\mathcal H}_\omega\cap {\mathcal K}(L^2({\Bbb G}))\neq\{0\}$.
	
	In all of these cases, $\widetilde{\mathcal H}_\omega=L^\infty(\widehat{\Bbb G})$.
\end{theorem}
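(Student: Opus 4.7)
The plan is to run the cycle (i) $\Rightarrow$ (iii) $\Rightarrow$ (ii) $\Rightarrow$ (i) and extract the concluding equality $\widetilde{\mathcal H}_\omega=L^\infty(\widehat{\Bbb G})$ as a byproduct of the last step. For (i) $\Rightarrow$ (iii), I would use that when ${\Bbb G}$ is compact, $\widehat{\Bbb G}$ is discrete and the Peter-Weyl decomposition writes $L^\infty(\widehat{\Bbb G})$ as a direct sum of finite-dimensional matrix blocks, each acting on the finite-dimensional isotypic subspace of $L^2({\Bbb G})$ associated with the corresponding irreducible representation of ${\Bbb G}$. Any one of the minimal central projections $e_\alpha$ of $L^\infty(\widehat{\Bbb G})$ is therefore a non-zero finite-rank projection on $L^2({\Bbb G})$, hence compact; since $\omega$ is a state we have $\langle\omega,1\rangle=1$ and so $L^\infty(\widehat{\Bbb G})\subseteq\widetilde{\mathcal H}_\omega$ as observed earlier in Section \ref{sec3}, which places $e_\alpha$ in $\widetilde{\mathcal H}_\omega\cap{\mathcal K}(L^2({\Bbb G}))$.

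For (iii) $\Rightarrow$ (ii), take $0\neq T\in\widetilde{\mathcal H}_\omega\cap{\mathcal K}(L^2({\Bbb G}))$. The non-triviality device used for (ii) $\Rightarrow$ (i) in the earlier theorem of Section \ref{sec3} applies verbatim: if one had $T\triangleright\gamma=0$ for every $\gamma\in\mathcal{T}_\triangleright({\Bbb G})$, then $\langle T,\gamma\rangle=\langle\omega\triangleright T,\gamma\rangle=\langle T\triangleright\gamma,\omega\rangle=0$ for all $\gamma$, forcing $T=0$. Pick $\gamma$ with $T\triangleright\gamma\neq 0$. Bimodule associativity of $\triangleright$ gives $\omega\triangleright(T\triangleright\gamma)=(\omega\triangleright T)\triangleright\gamma=T\triangleright\gamma$, so $T\triangleright\gamma\in\widetilde{\mathcal H}_\omega$; and the preliminary identity $\langle{\mathcal K}(L^2({\Bbb G}))\triangleright\mathcal{T}_\triangleright({\Bbb G})\rangle=C_0({\Bbb G})$ recalled from \cite{hnr} places $T\triangleright\gamma$ in $C_0({\Bbb G})$, so it witnesses (ii). For (ii) $\Rightarrow$ (i), let $0\neq x\in\widetilde{\mathcal H}_\omega\cap C_0({\Bbb G})$. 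Since $C_0({\Bbb G})\subseteq L^\infty({\Bbb G})$, Corollary \ref{cor00} gives $x\in{\mathcal H}_{\pi(\omega)}$, whence ${\mathcal H}_{\pi(\omega)}\cap C_0({\Bbb G})\neq\{0\}$; combining this with non-degeneracy of $\pi(\omega)$ and \cite[Theorem 3.7]{knr} forces ${\mathcal H}_{\pi(\omega)}={\Bbb C}1$, so $x$ is a non-zero multiple of $1$ and $1\in C_0({\Bbb G})$, i.e.\ ${\Bbb G}$ is compact.

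With ${\mathcal H}_{\pi(\omega)}={\Bbb C}1$ extracted from the last step, Theorem \ref{on} immediately delivers $\widetilde{\mathcal H}_\omega=L^\infty(\widehat{\Bbb G})$, which is the concluding assertion. The main hurdle is the implication (iii) $\Rightarrow$ (ii): a compact operator sitting in $\widetilde{\mathcal H}_\omega$ need not itself belong to $C_0({\Bbb G})$, but the right action by a suitable $\gamma$ simultaneously keeps us inside $\widetilde{\mathcal H}_\omega$, via bimodule associativity, and transports us into $C_0({\Bbb G})$, via the identity from \cite{hnr}. The delicate part is selecting $\gamma$ so that $T\triangleright\gamma$ is non-zero, which is carried out by the same duality trick as in the earlier theorem and is the only ingredient of the cycle requiring a contradiction argument.
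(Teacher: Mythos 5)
Your cycle is correct, and it differs from the paper in how the hardest implication is handled. The paper proves (i)$\Leftrightarrow$(ii) by quoting the $C_0$-criterion from \cite{knr} and then proves (iii)$\Rightarrow$(i) \emph{directly}: it reduces to a self-adjoint norm-one $x$, picks a state $\mu$ with $\langle\mu,x\rangle=\|x\|$, and uses non-degeneracy (via \cite[Lemma 3.3]{knr}) to force $1=x\triangleright\mu\in C_0({\Bbb G})$ by contradiction. You instead prove (iii)$\Rightarrow$(ii) by the module-action trick --- the duality identity $\langle T,\gamma\rangle=\langle T\triangleright\gamma,\omega\rangle$ produces a $\gamma$ with $T\triangleright\gamma\neq 0$, bimodule associativity keeps $T\triangleright\gamma$ in $\widetilde{\mathcal H}_\omega$, and $\langle {\mathcal K}(L^2({\Bbb G}))\triangleright\mathcal{T}_\triangleright({\Bbb G})\rangle=C_0({\Bbb G})$ places it in $C_0({\Bbb G})$ --- and then funnel everything into the cited compactness criterion for harmonic elements of $C_0({\Bbb G})$. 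Your route is shorter and avoids the self-adjointness reduction and the positivity argument entirely, at the cost of leaning completely on the black box from \cite{knr} for (ii)$\Rightarrow$(i); the paper's direct argument for (iii)$\Rightarrow$(i) is more self-contained and only needs the elementary non-degeneracy lemma. Your (i)$\Rightarrow$(iii) via finite-rank central projections is essentially the paper's argument ($C_0(\widehat{\Bbb G})\subseteq{\mathcal K}(L^2({\Bbb G}))$ for discrete $\widehat{\Bbb G}$) made explicit, and the concluding equality $\widetilde{\mathcal H}_\omega=L^\infty(\widehat{\Bbb G})$ via ${\mathcal H}_{\pi(\omega)}={\Bbb C}1$ and Theorem \ref{on} matches the paper. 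One bookkeeping caveat: the result you want for (ii)$\Rightarrow$(i) is \cite[Theorem 3.8]{knr} (the $C_0({\Bbb G})$ criterion, which also yields ${\mathcal H}_{\pi(\omega)}={\Bbb C}1$); Theorem 3.7 of that paper is the finiteness/compact-operator statement used elsewhere, so make sure the reference you invoke actually delivers ${\mathcal H}_{\pi(\omega)}={\Bbb C}1$ from ${\mathcal H}_{\pi(\omega)}\cap C_0({\Bbb G})\neq\{0\}$.
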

\begin{proof}
	Th equivalence (i)$\Leftrightarrow$(ii) follows  from the equality $\widetilde{\mathcal H}_\omega\cap C_0({\Bbb G})={\mathcal H}_{\pi(\omega)}\cap C_0({\Bbb G})$ and \cite[Theorem 3.8]{knr}. 
	
	(i)$\Rightarrow$(iii). Suppose that  ${\Bbb G}$ is compact. Then  $\widehat{\Bbb G}$ is discrete and hence it follows by \cite[Theorem 3.7]{hnr}, applied to $\widehat{\Bbb G}$, that $C_0(\widehat{\Bbb G})\subseteq {\mathcal K}(L^2({\Bbb G}))$. Moreover, $L^\infty(\widehat{\Bbb G})\subseteq \widetilde{\mathcal H}_\omega$. These show that $C_0(\widehat{\Bbb G})\subseteq \widetilde{\mathcal H}_\omega\cap {\mathcal K}(L^2({\Bbb G}))$. 
	
	(iii)$\Rightarrow$(i). Suppose that $x\in\widetilde{\mathcal H}_\omega\cap {\mathcal K}(L^2({\Bbb G}))$ is non-zero.
	Since $\widetilde{\mathcal H}_\omega\cap {\mathcal K}(L^2({\Bbb G}))$ is generated by its self-adjoint elements, we can assume that $x$ is self-adjoint and $\|x\|=1$. Without loss of generality, we can find a state $\mu\in \mathcal{T}_\triangleright({\Bbb G})={\mathcal K}(L^2({\Bbb G}))^*$ such that $\langle \mu, x\rangle=\|x\|$. If $x\neq 1$, then   $1-x$ is  a non-zero positive operator in $\widetilde{\mathcal H}_\omega$. Therefore,  $(1-x)\triangleright \mu=1-x\triangleright \mu$ is a positive operator in $\widetilde{\mathcal H}_\omega\cap LUC({\Bbb G})$. Now, suppose that $1-x\triangleright \mu\neq 0$. Then by \cite[Lemma 3.3]{knr} and non-degeneracy of $\omega$, we conclude that there is $n\in{\Bbb N}$ such that 
	$$
	\langle \omega^n, 1-x\triangleright \mu\rangle >0.
	$$
	On the other hand, since $x\in \widetilde{\mathcal H}_\omega$, we have   $\omega^n\triangleright x=x$. Therefore, 
	\begin{eqnarray*}
		\langle \omega^n, 1-x\triangleright \mu\rangle&=&1-\langle \omega^n, x\triangleright \mu\rangle\\
		&=&1-\langle \omega^n\triangleright x, \mu\rangle\\
		&=&1-\langle\mu, x\rangle=0,
	\end{eqnarray*}
	which is a contradiction. Thus, $1=x\triangleright \mu\in C_0({\Bbb G})$, which implies that ${\Bbb G}$ is compact. The last statement follows from  \cite[Theorem 5.3]{knr} and Theorem \ref{on}.
\end{proof}

\section{{Ideals $J_\omega$ and harmonic operators}}\label{sec4}
Let ${\Bbb G}$ be a locally compact quantum  group. Given $\omega\in \mathcal{T}_\triangleright({\Bbb G})$ with $\|\omega\|=1$, the set
$$J_\omega:=\overline{\{\gamma-\gamma\triangleright\omega: \gamma\in \mathcal{T}_\triangleright({\Bbb G})\}}^{\|\cdot\|}$$  is a closed left ideal in the convolution algebra
$\mathcal{T}_\triangleright({\Bbb G})$. Moreover, it is easy to see that the annihilator of  $J_\omega$ in ${\mathcal B}(L^2({\Bbb G}))$ is equal to $\widetilde{\mathcal H}_\omega$. Let $\omega\in S(\mathcal{T}_\triangleright({\mathbb G}))$. Then it will be useful to see that $\gamma-\gamma\triangleright\omega_n$ belongs to $J_\omega$ for all $\gamma\in \mathcal{T}_\triangleright({\mathbb G})$ and $n\in{\Bbb N}$, where $\omega_n$ is the Cesaro sums $\frac{1}{n}\sum_{k=1}^n \omega^k$. Moreover, it is easy to see that $\lim_{n\rightarrow \infty}(\gamma-\gamma \triangleright \omega)\triangleright \omega_n=0$ for all $\gamma\in \mathcal{T}_\triangleright({\mathbb G})$.
Thus, in this case the ideal $J_\omega$ can be expressed as follows
$$
J_\omega=\{\gamma\in \mathcal{T}_\triangleright({\Bbb G}): \lim_{n\rightarrow\infty}\|\gamma\triangleright\omega_n\|=0 \}.
$$
This shows that if $(e_{i})_{{i}\in \Lambda}$ is a bounded right approximate identity for $\mathcal{T}_\triangleright({\mathbb G})$,  then the double-indexed net $(e_{i}-e_{i}\triangleright\omega_n)_{n\in{\Bbb N}, {i}\in \Lambda}$ is a 
bounded right approximate identity for the left ideal $J_\omega$.
\begin{remark}
	We recall that
	the bimodule action  of $\mathcal{T}_\triangleright({\mathbb G})$ on $L^{\infty}(\widehat{\mathbb{G}})$ satisfies
	$$\hat{x} \triangleright \omega=\langle\omega, \hat{x}\rangle 1,
	\quad \omega \triangleright \hat{x}=\langle\omega, 1\rangle \hat{x},$$
	for all $\hat{x} \in L^{\infty}(\widehat{\mathbb{G}})$ and $\omega \in \mathcal{T}_\triangleright({\mathbb G})$.
	This implies that	the pre-annihilator
	$L^\infty(\widehat{\mathbb{G}})_{\perp}:=\{\omega \in \mathcal{T}_\triangleright({\mathbb G}): \omega|_{L^\infty(\widehat{\mathbb{G}})}= 0\}$ of $L^\infty(\widehat{\mathbb{G}})$
	is a two-sided ideal in $\mathcal{T}_\triangleright({\mathbb G})$ which is contained in the augmentation ideal
	$\mathcal{T}_\triangleright({\mathbb G})_0:=\{\omega\in\mathcal{T}_\triangleright({\mathbb G}): \langle \omega, 1\rangle=0\}$
	and for each $\omega\in \mathcal{T}_\triangleright({\mathbb G})$ with $\langle \omega, 1\rangle=1$, $L^\infty(\widehat{\mathbb{G}})_{\perp}$ contains the left ideal $J_\omega$. Finally, we note that the multiplication $\triangleright$ induces a
	multiplication on the quotient algebra $\mathcal{T}_\triangleright({\mathbb G})/L^\infty(\widehat{\mathbb{G}})_{\perp}\cong L^1(\widehat{\mathbb{G}})$, also denoted by $\triangleright$.
	This multiplication, however, is not the usual convolution product on $L^1(\widehat{\mathbb{G}})$. Indeed, it is
	easy to check  that
	$$
	\hat{f}\triangleright \hat{g}=\langle\hat{g}, 1\rangle\hat{f}\quad (\hat{f},\hat{g}\in L^1(\widehat{\mathbb{G}})).
	$$
	Thus, every $\hat{e}\in L^1(\widehat{\mathbb{G}})$ with $\langle \hat{e}, 1\rangle=1$ is a right identity for $\mathcal{T}_\triangleright({\mathbb G})/L^\infty(\widehat{\mathbb{G}})_{\perp}\cong L^1(\widehat{\mathbb{G}})$. 
\end{remark}

\begin{theorem}\label{a-co}
	Let ${\Bbb G}$ be a locally compact quantum group. Then   the following statements hold.
	
	\rm{(i)}  $L^\infty(\widehat{\mathbb{G}})_{\perp}$ has a  bounded right approximate identity if and only if  $\mathbb{G}$  is co-amenable and  amenable.
	
	\rm{(ii)} $L^\infty(\widehat{\mathbb{G}})_{\perp}$ has a right identity if and only if $\mathbb{G}$  is finite.	
\end{theorem}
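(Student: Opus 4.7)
The plan is to leverage the short exact sequence
\[
0 \to L^\infty(\widehat{\mathbb{G}})_{\perp} \to \mathcal{T}_\triangleright(\mathbb{G}) \xrightarrow{\pi} L^1(\widehat{\mathbb{G}}) \to 0,
\]
together with the remark immediately preceding the theorem, which tells us the induced product on the quotient is the trivial one $\hat{f} \triangleright \hat{g} = \langle \hat{g}, 1\rangle \hat{f}$, admitting every unit-weight element as a right identity. The excerpt already records that a bai on $\mathcal{T}_\triangleright(\mathbb{G})$ yields a bai for each $J_\omega$, with $J_\omega \subseteq L^\infty(\widehat{\mathbb{G}})_{\perp}$ when $\omega$ is a state, so the question becomes: (a) produce the base bai on $\mathcal{T}_\triangleright(\mathbb{G})$, and (b) pin down an $\omega$ for which $J_\omega$ is the whole pre-annihilator.

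For (i), direction $(\Leftarrow)$, I would first argue that $\mathcal{T}_\triangleright(\mathbb{G})$ itself has a bai. Co-amenability gives a bai $(f_i)$ for $L^1(\mathbb{G})$; a Hahn--Banach lift $\omega_i$ with $\pi(\omega_i) = f_i$ only yields $\pi$-convergence, and amenability is used to close the gap via a Reiter-type net of approximately left-invariant states in $L^1(\mathbb{G})$, producing a bai $(E_\alpha)$ on $\mathcal{T}_\triangleright(\mathbb{G})$. With $(E_\alpha)$ in hand and a state $\omega \in \mathcal{T}_\triangleright(\mathbb{G})$ chosen so that the Cesaro averages $\omega_n$ converge in a suitable ergodic sense against $L^\infty(\widehat{\mathbb{G}})_\perp$, the excerpt's construction $(E_\alpha - E_\alpha \triangleright \omega_n)_{\alpha, n}$ is a bai for $J_\omega$, and an annihilator computation shows $J_\omega = L^\infty(\widehat{\mathbb{G}})_{\perp}$. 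Conversely, given a bai $(u_i)$ in $L^\infty(\widehat{\mathbb{G}})_{\perp}$: because $\pi(L^\infty(\widehat{\mathbb{G}})_{\perp})$ is norm-dense in $L^1(\mathbb{G})_0$ (its annihilator in $L^\infty(\mathbb{G})$ is $L^\infty(\mathbb{G}) \cap L^\infty(\widehat{\mathbb{G}}) = \mathbb{C}1$), the bounded net $(\pi(u_i))$ is a bai for $L^1(\mathbb{G})_0$, and combining with a fixed state yields a bai for $L^1(\mathbb{G})$, giving co-amenability. A weak$^*$-cluster point $U \in \mathcal{T}_\triangleright(\mathbb{G})^{**}$ then satisfies $\omega \square U = \omega$ on the ideal, and $\pi^{**}(U)$ together with an auxiliary state produces a topological left-invariant mean on $L^\infty(\mathbb{G})$, delivering amenability.

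For (ii), direction $(\Leftarrow)$ follows from (i) by finite-dimensionality: a bai in a finite-dimensional Banach algebra has a norm-cluster point that is a right identity. For $(\Rightarrow)$, the condition $\omega \triangleright u = \omega$ for $\omega \in L^\infty(\widehat{\mathbb{G}})_{\perp}$ is equivalent by bipolar duality to $u \triangleright x - x \in L^\infty(\widehat{\mathbb{G}})$ for every $x \in \mathcal{B}(L^2(\mathbb{G}))$. Restricting to $x \in L^\infty(\mathbb{G})$ forces $\pi(u) \star x - x \in L^\infty(\mathbb{G}) \cap L^\infty(\widehat{\mathbb{G}}) = \mathbb{C}1$, so by normality there is $f' \in L^1(\mathbb{G})$ with $\langle f', 1\rangle = 1$ and $\pi(u) \star x = x - \langle f', x\rangle 1$. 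Associativity of the $L^1$-action then yields $(\iota \otimes f')\Gamma(x) = \langle f', x\rangle 1$, i.e., $f'$ is a normal left-invariant state on $L^\infty(\mathbb{G})$; this forces $\mathbb{G}$ to be compact and identifies $f' = h$ with the Haar state. The resulting equation $(\pi(u) + h) \star x = x$ for all $x \in L^\infty(\mathbb{G})$ exhibits $L^1(\mathbb{G})$ as unital, so $\mathbb{G}$ is also discrete, and hence finite.

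The main obstacle is part (i) $(\Leftarrow)$: both producing a bai on $\mathcal{T}_\triangleright(\mathbb{G})$ from amenability plus co-amenability, and isolating a state $\omega$ whose $J_\omega$ exhausts $L^\infty(\widehat{\mathbb{G}})_{\perp}$, require combining Reiter-type invariance arguments with non-commutative lifting that is more delicate than in the abelian setting; the naive lift of a bai from $L^1(\mathbb{G})$ does not converge in $\mathcal{T}_\triangleright(\mathbb{G})$-norm, so amenability must be used non-trivially to close the gap.
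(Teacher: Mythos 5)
Your forward direction of (i) and your treatment of (ii) are essentially sound and close to the paper's argument: the paper likewise forms $P=\mathrm{id}-L_E$ for a weak$^*$ cluster point $E$ of the right approximate identity, observes that $P$ maps $L^\infty(\mathbb{G})$ into $L^\infty(\mathbb{G})\cap L^\infty(\widehat{\mathbb{G}})=\mathbb{C}1$, and reads off a (normal, in case (ii)) left invariant functional; your duality reformulation $u\triangleright x-x\in L^\infty(\widehat{\mathbb{G}})$ is exactly this projection in disguise. Two smaller remarks: the paper obtains co-amenability by first extending the bounded right approximate identity of the ideal to one of $\mathcal{T}_\triangleright(\mathbb{G})$ (using that the quotient, with product $\hat f\triangleright\hat g=\langle\hat g,1\rangle\hat f$, has a right identity) and then invoking Hu--Neufang--Ruan, rather than working in $L^1(\mathbb{G})_0$ as you do; and in (ii) it deduces discreteness from a right identity of $\mathcal{T}_\triangleright(\mathbb{G})$ rather than of $L^1(\mathbb{G})$. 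Both of your variants appear workable.

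The genuine gap is in the $(\Leftarrow)$ direction of (i). Your plan is to locate a single state $\omega$ with $J_\omega=L^\infty(\widehat{\mathbb{G}})_\perp$ and then transport a bounded right approximate identity of $\mathcal{T}_\triangleright(\mathbb{G})$ into $J_\omega$. But producing such an $\omega$ from amenability is precisely the content of the last theorem of Section \ref{sec4}, and there it requires the additional hypothesis that $\mathcal{T}_\triangleright(\mathbb{G})$ be separable: amenability (via Reiter and Mazur) only yields a net of states $\omega_i$ with $\|\gamma\triangleright\omega_i\|\to 0$ for each fixed $\gamma\in L^\infty(\widehat{\mathbb{G}})_\perp$, and merging these into one state whose ideal $J_\omega$ exhausts the whole pre-annihilator is a Willis-type diagonal argument (Lemma \ref{xxx}) over a countable dense subset; the ``suitable ergodic sense'' you invoke is exactly this missing step, and it is not available for general $\mathbb{G}$. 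Since Theorem \ref{a-co} carries no separability assumption, your route cannot prove it as stated. The paper avoids this entirely: amenability gives, by the Crann--Neufang covariant injectivity theorem, a norm-one completely bounded right $\mathcal{T}_\triangleright(\mathbb{G})$-module projection of $\mathcal{B}(L^2(\mathbb{G}))$ onto $L^\infty(\widehat{\mathbb{G}})=(L^\infty(\widehat{\mathbb{G}})_\perp)^\perp$, i.e.\ right invariant complementation, and Forrest's criterion then converts this, together with the bounded right approximate identity of $\mathcal{T}_\triangleright(\mathbb{G})$, into one for $L^\infty(\widehat{\mathbb{G}})_\perp$. Note also that amenability is not needed to produce the approximate identity on $\mathcal{T}_\triangleright(\mathbb{G})$ itself: by Hu--Neufang--Ruan that is equivalent to co-amenability alone, so your statement that amenability is needed to ``close the gap'' in the lifting is off the mark.
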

\begin{proof}
	(i). Suppose that the ideal $L^\infty(\widehat{\mathbb{G}})_{\perp}$ has a bounded right approximate identity. Since the quotient algebra $\mathcal{T}_\triangleright({\mathbb G})/L^\infty(\widehat{\mathbb{G}})_{\perp}$ has also a right identity, we can build a bounded right approximate identity  for $\mathcal{T}_\triangleright({\mathbb G})$; see \cite[Pg. 43]{doran}. Hence, ${\Bbb G}$ is co-amenable by \cite[Proposition 5.4]{hnr2011}. Now, let $E$  be a weak$^*$ cluster point in ${\mathcal B}(L^2({\Bbb G}))^{*}$ of a  bounded right approximate identity in $L^\infty(\widehat{\mathbb{G}})_{\perp}$. Putting $P:={\rm id}-L_E$, it is easy to see that $P$ is a unital right  $\mathcal{T}_\triangleright({\mathbb G})$-module projection from ${\mathcal B}(L^2({\Bbb G}))$ onto $L^\infty(\widehat{\mathbb{G}})$. Since $L_E|_{L^\infty({\Bbb G})}=L_{\pi^{**}(E)}$, it follows that $P(L^\infty({\Bbb G}))\subseteq L^\infty({\Bbb G})\cap L^\infty(\widehat{\Bbb G})={\Bbb C}1$. Thus, by restriction
	there is a unique functional $m\in L^\infty({\Bbb G})^*$ satisfying $P(x)=\langle m, x\rangle 1$ for all $x\in L^\infty({\Bbb G})$. It is easy to see that for every $x\in L^\infty({\Bbb G})$ and  $f\in L^1({\Bbb G})$, we have 
	$$\langle m, x\star f\rangle 1=P(x\triangleright \omega)=P(x)\triangleright\omega=\langle \omega, 1\rangle\langle m , x\rangle 1=\langle f, 1\rangle\langle m , x\rangle 1,$$
	where $\omega\in \mathcal{T}_\triangleright({\mathbb G})$ is a contractive normal extension of $f$.
	Moreover, since   $P(1)=1$, we conclude that $\langle m, 1\rangle=1$. This shows that $m$ is a left invariant functional on $L^\infty({\Bbb G})$, which implies that ${\Bbb G}$ is amenable by \cite[Theorem 2.1]{Ru}. For the converse, first we note that co-amenability of ${\Bbb G}$ implies that $\mathcal{T}_\triangleright({\mathbb G})$ has a bounded right approximate identity by \cite[Proposition 5.4]{hnr2011}. Now, using \cite[Theorem 4.2]{cran} and amenability of ${\Bbb G}$, we may find a norm-one  projection   $P: {\mathcal B}(L^2({\Bbb G}))\rightarrow L^\infty(\widehat{\mathbb{G}})$ in $\mathcal{CB}_{\mathcal{T}_\triangleright}({\mathcal B}\left(L^{2}(\mathbb{G})\right))$. This means that $L^\infty(\widehat{\mathbb{G}})=(L^\infty(\widehat{\mathbb{G}})_\perp)^\perp$ is right invariantly complemented and so $L^\infty(\widehat{\mathbb{G}})_\perp$ has a bounded right approximate identity by \cite[Proposition 6.4]{for}.
	
	(ii). Suppose that  $L^\infty(\widehat{\mathbb{G}})_{\perp}$ has a right identity. Similarly to the first part, one can show that $\mathcal{T}_\triangleright({\mathbb G})$ has a right identity, which implies that ${\Bbb G}$ is discrete by \cite[Proposition 3.7]{kn}. Let $e$  be a right identity for $L^\infty(\widehat{\mathbb{G}})_{\perp}$. Putting $P:={\rm id}-L_e$, it is easy to see that $P$ is  a  unital normal right  $\mathcal{T}_\triangleright({\mathbb G})$-module projection from ${\mathcal B}(L^2({\Bbb G}))$ onto $L^\infty(\widehat{\mathbb{G}})$. Using the normality of $P$ and  a similar argument used in  the proof of part (i), we can show that there is a normal left invariant functional on $L^\infty({\Bbb G})$, which implies that ${\Bbb G}$ is compact by a slight generalization of  \cite[Proposition 3.1]{bet}. Thus, ${\Bbb G}$ is finite. The converse, is trivial.
\end{proof}

%
\begin{proposition}\label{ame}
	Let ${\Bbb G}$ be a locally compact quantum group such that  there exists a state $\omega\in \mathcal{T}_\triangleright({\mathbb G})$ with $\widetilde{\mathcal H}_\omega=L^\infty(\widehat{\mathbb{G}})$. Then $\mathbb{G}$ is amenable.
\end{proposition}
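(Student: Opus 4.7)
The plan is to use the contractive projection from Lemma \ref{th0} to manufacture a left invariant mean on $L^\infty(\mathbb{G})$, closely following the template of the first implication in the proof of Theorem \ref{a-co}(i) but with $L_{m_{\mathcal{U}}}$ in place of $\mathrm{id}-L_E$.

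I would begin by fixing any free ultrafilter $\mathcal{U}$ on $\mathbb{N}$. By Remark \ref{th2}, $m_{\mathcal{U}}$ is then a state on ${\mathcal B}(L^2(\mathbb{G}))$, so $L_{m_{\mathcal{U}}}$ is unital and positive; by Lemma \ref{th0} it is a contractive projection of ${\mathcal B}(L^2(\mathbb{G}))$ onto $\widetilde{\mathcal H}_\omega=L^\infty(\widehat{\mathbb{G}})$; and, since $\Phi$ lands in $\mathcal{CB}_{\mathcal{T}_\triangleright}({\mathcal B}(L^2(\mathbb{G})))$, it is moreover a right $\mathcal{T}_\triangleright(\mathbb{G})$-module map. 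Invoking Lemma \ref{th00}, the restriction of $L_{m_{\mathcal{U}}}$ to $L^\infty(\mathbb{G})$ stays inside $L^\infty(\mathbb{G})$, so
$$L_{m_{\mathcal{U}}}\bigl(L^\infty(\mathbb{G})\bigr)\subseteq L^\infty(\mathbb{G})\cap L^\infty(\widehat{\mathbb{G}})=\mathbb{C}1,$$
which lets me define a unique state $m\in L^\infty(\mathbb{G})^*$ by $L_{m_{\mathcal{U}}}(x)=\langle m,x\rangle 1$ for $x\in L^\infty(\mathbb{G})$.

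To verify that $m$ is left invariant, I would fix $f\in L^1(\mathbb{G})$ and choose, via Hahn--Banach, a contractive normal lift $\tau\in\mathcal{T}_\triangleright(\mathbb{G})$ of $f$, so that $x\star f=x\triangleright\tau$ for every $x\in L^\infty(\mathbb{G})$. Using the right module property of $L_{m_{\mathcal{U}}}$, the fact that $L_{m_{\mathcal{U}}}(x)\in\mathbb{C}1\subseteq L^\infty(\widehat{\mathbb{G}})$, and the formula $1\triangleright\tau=\langle\tau,1\rangle 1=\langle f,1\rangle 1$ from the preliminaries, the identity
$$\langle m, x\star f\rangle 1=L_{m_{\mathcal{U}}}(x\triangleright\tau)=L_{m_{\mathcal{U}}}(x)\triangleright\tau=\langle m,x\rangle\,\langle f,1\rangle 1$$
falls out, giving left invariance and hence the amenability of $\mathbb{G}$.

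I do not anticipate any deep obstacle. The one subtlety worth naming is to recognize that $L_{m_{\mathcal{U}}}$ already plays the role that $\mathrm{id}-L_E$ plays in Theorem \ref{a-co}(i), so one sidesteps the construction of a bounded right approximate identity in $L^\infty(\widehat{\mathbb{G}})_\perp$---which would additionally force co-amenability, a conclusion not claimed by the proposition.
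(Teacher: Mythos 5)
Your argument is correct. Every step checks out against the paper's own machinery: $m_{\mathcal{U}}$ is a state by Remark \ref{th2}, so $L_{m_{\mathcal{U}}}$ is a unital contraction (hence positive) and, by Lemma \ref{th0}, a projection onto $\widetilde{\mathcal H}_\omega=L^\infty(\widehat{\mathbb{G}})$ lying in $\mathcal{CB}_{\mathcal{T}_\triangleright}({\mathcal B}(L^{2}(\mathbb{G})))$; Lemma \ref{th00} forces $L_{m_{\mathcal{U}}}(L^\infty(\mathbb{G}))\subseteq L^\infty(\mathbb{G})\cap L^\infty(\widehat{\mathbb{G}})=\mathbb{C}1$; and the module identity $L_{m_{\mathcal{U}}}(x\triangleright\tau)=L_{m_{\mathcal{U}}}(x)\triangleright\tau$ together with $1\triangleright\tau=\langle\tau,1\rangle 1$ yields exactly the invariance relation $\langle m,x\star f\rangle=\langle f,1\rangle\langle m,x\rangle$, so $m$ is a left invariant mean and $\mathbb{G}$ is amenable by definition.

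Where you diverge from the paper: the paper's proof of this proposition is a one-liner --- it produces the same covariant contractive projection $L_{m_{\mathcal{U}}}$ onto $L^\infty(\widehat{\mathbb{G}})$ and then immediately invokes the Crann--Neufang covariant injectivity theorem (\cite[Theorem 4.2]{cran}, the same external result used for the converse direction in Theorem \ref{a-co}(i)). You instead unpack that citation by running the restriction argument that the paper itself deploys in the \emph{forward} direction of Theorem \ref{a-co}(i), with $L_{m_{\mathcal{U}}}$ replacing $\mathrm{id}-L_E$. Your route is more self-contained and in fact slightly cleaner than the Theorem \ref{a-co}(i) template, because $L_{m_{\mathcal{U}}}$ is automatically unital and positive (so $m$ is a genuine state and one does not even need the appeal to \cite[Theorem 2.1]{Ru} that the paper makes there), and, as you correctly observe, no bounded right approximate identity in $L^\infty(\widehat{\mathbb{G}})_\perp$ --- and hence no co-amenability --- is needed. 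What the paper's citation buys is brevity and the fact that the black box is an equivalence, which it reuses elsewhere; what your argument buys is transparency and independence from \cite{cran}.
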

\begin{proof} Let   $\mathcal{U}$ be a free ultrafilter on ${\Bbb N}$ and let $L_{m_{\mathcal{U}}}$ be the contractive projection from ${\mathcal B}(L^2({\Bbb G}))$ onto  $\widetilde{\mathcal H}_\omega=L^\infty(\widehat{\mathbb{G}})$ as defined in Lemma \ref{th0}, which is also contained in $\mathcal{CB}_{\mathcal{T}_\triangleright}({\mathcal B}\left(L^{2}(\mathbb{G})\right))$. Then $G$ is amenable by \cite[Theorem 4.2]{cran} .	
\end{proof}
\begin{lemma}\label{lem1}
	Let ${\Bbb G}$ be a locally compact quantum group. Then $$(\mathcal{T}_\triangleright({\mathbb G})\otimes L^\infty(\widehat{\mathbb{G}})_{\perp})^\perp={\mathcal B}(L^{2}(\mathbb{G})) \bar{\otimes} L^\infty(\widehat{\mathbb{G}}).$$
\end{lemma}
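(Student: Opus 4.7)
My plan is to identify the pre-annihilator directly through the slice-map characterization of the von Neumann algebra tensor product. The duality pairing between $\mathcal{T}(L^{2}(\mathbb{G})) \widehat{\otimes} \mathcal{T}(L^{2}(\mathbb{G}))$ and $\mathcal{B}(L^{2}(\mathbb{G})) \bar{\otimes} \mathcal{B}(L^{2}(\mathbb{G}))$ factors on elementary tensors as $\langle T, \omega \otimes \tau\rangle = \langle (\omega \otimes \iota)(T), \tau\rangle$. Thus $T$ annihilates $\mathcal{T}_\triangleright({\Bbb G})\otimes L^\infty(\widehat{\mathbb{G}})_{\perp}$ exactly when $\langle(\omega\otimes\iota)(T),\tau\rangle=0$ for all $\omega \in \mathcal{T}(L^{2}(\mathbb{G}))$ and $\tau \in L^\infty(\widehat{\mathbb{G}})_{\perp}$. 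Since $L^\infty(\widehat{\mathbb{G}})$ is weak$^*$-closed in $\mathcal{B}(L^{2}(\mathbb{G}))$, the bipolar theorem gives $(L^\infty(\widehat{\mathbb{G}})_{\perp})^\perp = L^\infty(\widehat{\mathbb{G}})$, so this is equivalent to $(\omega \otimes \iota)(T) \in L^\infty(\widehat{\mathbb{G}})$ for every $\omega \in \mathcal{T}(L^{2}(\mathbb{G}))$.

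To convert this slice-level condition into membership in the von Neumann algebra tensor product, I will invoke the classical characterization: for any von Neumann algebra $N \subseteq \mathcal{B}(K)$, one has $T \in \mathcal{B}(H) \bar{\otimes} N$ if and only if $(\omega \otimes \iota)(T) \in N$ for every $\omega \in \mathcal{T}(H)$. This is the equality of the spatial tensor product with its Fubini enlargement, which in turn rests on Tomita's commutation theorem $\mathcal{B}(H) \bar{\otimes} N' = (\mathcal{B}(H) \bar{\otimes} N)'$. Applying this with $N = L^\infty(\widehat{\mathbb{G}})$ and $H = K = L^{2}(\mathbb{G})$ yields the inclusion $\subseteq$.

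The reverse inclusion is immediate from the same slice characterization: if $T \in \mathcal{B}(L^{2}(\mathbb{G})) \bar{\otimes} L^\infty(\widehat{\mathbb{G}})$ then $(\omega \otimes \iota)(T) \in L^\infty(\widehat{\mathbb{G}})$, hence pairs to zero with any $\tau \in L^\infty(\widehat{\mathbb{G}})_{\perp}$; by linearity and weak$^*$ continuity of each slice, $T$ annihilates the whole of $\mathcal{T}_\triangleright({\Bbb G})\otimes L^\infty(\widehat{\mathbb{G}})_{\perp}$. There is no serious obstacle: the lemma is essentially a packaging of the bipolar theorem with the slice-map description of $\bar{\otimes}$, and the only point worth flagging is that the annihilator is taken inside the full algebra $\mathcal{B}(L^{2}(\mathbb{G}))\bar{\otimes}\mathcal{B}(L^{2}(\mathbb{G}))$ so that no further constraint on the first leg appears.
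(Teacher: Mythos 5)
Your proposal is correct and follows essentially the same route as the paper: both reduce membership in the annihilator to the condition that all left slices $(\omega\otimes\iota)(T)$ land in $L^\infty(\widehat{\mathbb{G}})$ (via the bipolar identity $(L^\infty(\widehat{\mathbb{G}})_{\perp})^\perp=L^\infty(\widehat{\mathbb{G}})$), and then invoke the Tomiyama slice-map characterization of the von Neumann tensor product to conclude. The only cosmetic difference is that the paper cites Tomiyama's proposition with both slice conditions (noting the right slices trivially lie in $\mathcal{B}(L^2(\mathbb{G}))$), whereas you justify the one-sided version directly through the commutation theorem.
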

\begin{proof}
	It is clear from the definition	that
	$$
	{\mathcal B}(L^{2}(\mathbb{G})) \bar{\otimes} L^\infty(\widehat{\mathbb{G}})\subseteq(\mathcal{T}_\triangleright({\mathbb G})\otimes L^\infty(\widehat{\mathbb{G}})_{\perp})^\perp
	$$	
	Now, given $u\in (\mathcal{T}_\triangleright({\mathbb G})\otimes L^\infty(\widehat{\mathbb{G}})_{\perp})^\perp$, $\omega_1,\omega_2\in\mathcal{T}_\triangleright({\mathbb G})$ and $\gamma\in L^\infty(\widehat{\mathbb{G}})_{\perp}$, we have 
	$$
	\langle (\omega_1\otimes \iota)(u), \gamma\rangle=(\omega_1\otimes \gamma)(u)=0
	$$
	This shows that $(\omega_1\otimes \iota)(u)\in (L^\infty(\widehat{\mathbb{G}})_{\perp})^\perp=L^\infty(\widehat{\mathbb{G}})$. Obviously,  $(\iota\otimes \omega_2)(u)\in {\mathcal B}(L^{2}(\mathbb{G}))$. Since $\omega_1, \omega_2$ are arbitrary, it follows from \cite[Proposition 2.1]{tomy} that $u\in {\mathcal B}(L^{2}(\mathbb{G})) \bar{\otimes} L^\infty(\widehat{\mathbb{G}})$.
\end{proof}
\begin{theorem}\label{thm01}
	Let ${\Bbb G}$ be a locally compact quantum group. Then $$\langle \mathcal{T}_\triangleright({\mathbb G})\triangleright L^\infty(\widehat{\mathbb{G}})_{\perp}\rangle=L^\infty(\widehat{\mathbb{G}})_{\perp}.$$
\end{theorem}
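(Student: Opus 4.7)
The inclusion $\langle \mathcal{T}_\triangleright({\mathbb G})\triangleright L^\infty(\widehat{\mathbb{G}})_{\perp}\rangle \subseteq L^\infty(\widehat{\mathbb{G}})_{\perp}$ is immediate, since the Remark preceding Theorem \ref{a-co} asserts that $L^\infty(\widehat{\mathbb{G}})_{\perp}$ is a two-sided, hence left, ideal of $\mathcal{T}_\triangleright({\mathbb G})$. The content is therefore the reverse inclusion, which I propose to handle by passing to annihilators in ${\mathcal B}(L^2({\mathbb G}))$ and invoking the Hahn--Banach/bipolar theorem. Concretely, it suffices to prove that
\[
\bigl(\langle \mathcal{T}_\triangleright({\mathbb G})\triangleright L^\infty(\widehat{\mathbb{G}})_{\perp}\rangle\bigr)^{\perp} \;=\; L^\infty(\widehat{\mathbb{G}})_{\perp}^{\;\perp} \;=\; L^\infty(\widehat{\mathbb{G}}).
\]

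To compute the left-hand side, I would unfold the pairing: for $x\in{\mathcal B}(L^2({\mathbb G}))$, $\omega\in\mathcal{T}_\triangleright({\mathbb G})$ and $\gamma\in L^\infty(\widehat{\mathbb{G}})_{\perp}$, the duality $\langle \omega\triangleright\gamma,x\rangle=\langle \widetilde{\Gamma}(x),\omega\otimes\gamma\rangle$ shows that $x$ annihilates $\mathcal{T}_\triangleright({\mathbb G})\triangleright L^\infty(\widehat{\mathbb{G}})_{\perp}$ if and only if $\widetilde{\Gamma}(x)\in(\mathcal{T}_\triangleright({\mathbb G})\otimes L^\infty(\widehat{\mathbb{G}})_{\perp})^{\perp}$. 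By Lemma \ref{lem1}, this annihilator equals ${\mathcal B}(L^2({\mathbb G}))\bar{\otimes}L^\infty(\widehat{\mathbb{G}})$. Combined with the structural fact $\widetilde{\Gamma}(x)\in {\mathcal B}(L^2({\mathbb G}))\bar{\otimes}L^\infty({\mathbb G})$ from the definition of $\widetilde{\Gamma}$, we obtain
\[
\widetilde{\Gamma}(x)\in \bigl({\mathcal B}(L^2({\mathbb G}))\bar{\otimes}L^\infty({\mathbb G})\bigr)\cap\bigl({\mathcal B}(L^2({\mathbb G}))\bar{\otimes}L^\infty(\widehat{\mathbb{G}})\bigr).
\]

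The main step is then to identify this intersection. Using the slice-map characterization of a von Neumann tensor product (an element of ${\mathcal B}(H)\bar{\otimes}{\mathcal B}(K)$ lies in ${\mathcal B}(H)\bar{\otimes}N$ iff all its left slices lie in $N$), the intersection above equals ${\mathcal B}(L^2({\mathbb G}))\bar{\otimes}(L^\infty({\mathbb G})\cap L^\infty(\widehat{\mathbb{G}}))$. Since $L^\infty({\mathbb G})\cap L^\infty(\widehat{\mathbb{G}})=\mathbb{C}1$ (a fact already used in the proof of Theorem \ref{on}), we conclude $\widetilde{\Gamma}(x)\in {\mathcal B}(L^2({\mathbb G}))\otimes 1$. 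Applying Lemma \ref{lem2} then gives $x\in L^\infty(\widehat{\mathbb{G}})$, and the bipolar theorem finishes the argument.

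The only genuinely delicate point is the slice-map identity $({\mathcal B}(L^2({\mathbb G}))\bar{\otimes}N_1)\cap({\mathcal B}(L^2({\mathbb G}))\bar{\otimes}N_2)={\mathcal B}(L^2({\mathbb G}))\bar{\otimes}(N_1\cap N_2)$ for von Neumann subalgebras $N_1,N_2\subseteq {\mathcal B}(L^2({\mathbb G}))$; this is standard but should be invoked explicitly, since everything else in the argument is a formal manipulation of the ingredients already assembled in Lemmas \ref{lem2} and \ref{lem1}.
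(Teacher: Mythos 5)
Your proposal is correct and follows essentially the same route as the paper: reduce to computing the annihilator, use Lemma \ref{lem1} to place $\widetilde{\Gamma}(x)$ in ${\mathcal B}(L^2({\mathbb G}))\bar{\otimes}L^\infty(\widehat{\mathbb{G}})$, intersect with ${\mathcal B}(L^2({\mathbb G}))\bar{\otimes}L^\infty({\mathbb G})$, and conclude via $L^\infty({\mathbb G})\cap L^\infty(\widehat{\mathbb{G}})={\Bbb C}1$ and Lemma \ref{lem2}. The only cosmetic difference is that the paper phrases the intersection step as ``all slices $(\omega\otimes\iota)(\widetilde{\Gamma}(x))$ lie in ${\Bbb C}1$, then apply Tomiyama's slice map theorem,'' which is exactly the content of the slice-map identity you flag as the delicate point.
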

\begin{proof}
	Suppose that $x\in ({\mathcal{T}_\triangleright({\mathbb G})\triangleright L^\infty(\widehat{\mathbb{G}})_{\perp}})^\perp$. Then 
	$$\langle \widetilde{\Gamma}(x), \omega\otimes \gamma\rangle=\langle x, \omega\triangleright\gamma\rangle=0$$ for all 
	$\omega\in \mathcal{T}_\triangleright({\mathbb G})$ and $\gamma\in L^\infty(\widehat{\mathbb{G}})_{\perp}$. This shows that 
	$\widetilde{\Gamma}(x)\in(\mathcal{T}_\triangleright({\mathbb G})\otimes L^\infty(\widehat{\mathbb{G}})_{\perp})^\perp$ and so 
	$\widetilde{\Gamma}(x)\in{\mathcal B}(L^{2}(\mathbb{G})) \bar{\otimes} L^\infty(\widehat{\mathbb{G}})$ by Lemma \ref{lem1}. On the other hand, it follows from the definition of the map $\widetilde{\Gamma}$	that $\widetilde{\Gamma}(x)\in{\mathcal B}(L^{2}(\mathbb{G})) \bar{\otimes} L^\infty({\mathbb{G}})$. Therefore,  $$(\omega\otimes \iota)(\widetilde{\Gamma}(x))\in L^\infty(\widehat{\mathbb{G}})\cap L^\infty({\mathbb{G}})={\Bbb C}1$$ for all $\omega\in \mathcal{T}_\triangleright({\mathbb G})$. Now, Tomiyama's slice map theorem \cite[Proposition 2.1]{tomy} implies that $$\widetilde{\Gamma}(x)\in{\mathcal B}(L^{2}(\mathbb{G})) \bar{\otimes}{\Bbb C}1={\mathcal B}(L^{2}(\mathbb{G})) {\otimes}1.$$ By Lemma \ref{lem2}, we conclude that $x\in L^\infty(\widehat{\mathbb{G}})$. This shows  that $({\mathcal{T}_\triangleright({\mathbb G})\triangleright L^\infty(\widehat{\mathbb{G}})_{\perp}})^\perp\subseteq L^\infty(\widehat{\mathbb{G}})$.  The other inclusion is trivial and consequently $$\langle \mathcal{T}_\triangleright({\mathbb G})\triangleright L^\infty(\widehat{\mathbb{G}})_{\perp}\rangle=(({\mathcal{T}_\triangleright({\mathbb G})\triangleright L^\infty(\widehat{\mathbb{G}})_{\perp}})^\perp)_\perp=L^\infty(\widehat{\mathbb{G}})_{\perp},$$
	as required.
\end{proof}
Before we can state and prove the main result of this section, we need the following lemma,  whose proof is similar  to those given in  \cite[Lemma 1.1]{wil}. Thus, we omit  the proof here. See also  the proof of 
\cite[Proposition 3.3]{nss}  in the quantum group case.
\begin{lemma}\label{xxx}
	Let ${\Bbb G}$ be a locally compact quantum group  such that $\mathcal{T}_\triangleright({\mathbb G})$ is separable and  let $J$ be a closed
	subspace of $\mathcal{T}_\triangleright({\mathbb G})$ satisfying
	
	\emph{(i)} ${ J}_\omega\subseteq J$ for all $\omega\in S(\mathcal{T}_\triangleright({\mathbb G}))$; and
	
	\emph{(ii)} for every  finite subset $A\subset J$ and every $\varepsilon>0$  there is
	$\omega\in S(\mathcal{T}_\triangleright({\mathbb G}))$ such that $$d(\gamma, { J}_\omega)=\inf\{\|\mu-\gamma\|:~\mu\in{J}_\omega \}<\varepsilon\quad(\gamma\in A).$$
	Then there is $\omega\in S(\mathcal{T}_\triangleright({\mathbb G}))$ such that $J={J}_\omega$.
\end{lemma}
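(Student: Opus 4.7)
The plan is to follow the Willis-style construction from \cite[Lemma 1.1]{wil}, adapted to the operator setting as in \cite[Proposition 3.3]{nss}. Hypothesis (i) already gives $J_\omega\subseteq J$ for every state $\omega$, so the substance is to produce a single $\omega\in S(\mathcal{T}_\triangleright({\mathbb G}))$ with $J\subseteq J_\omega$.

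First I would invoke separability of $\mathcal{T}_\triangleright({\mathbb G})$ to fix a countable dense sequence $\{\gamma_n\}_{n\geq 1}$ in $J$. Applying hypothesis (ii) to the finite set $\{\gamma_1,\ldots,\gamma_n\}$ with tolerance $2^{-n}$ yields a state $\mu_n\in S(\mathcal{T}_\triangleright({\mathbb G}))$ and elements $\eta_{i,n}\in J_{\mu_n}$ with $\|\gamma_i-\eta_{i,n}\|<2^{-n}$ for all $i\leq n$. One then assembles the candidate
$$
\omega:=\sum_{n=1}^\infty 2^{-n}\mu_n,
$$
which is norm convergent in $\mathcal{T}_\triangleright({\mathbb G})$ and, as a convex combination of states, belongs to $S(\mathcal{T}_\triangleright({\mathbb G}))$.

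The crux is the inclusion $J_{\mu_n}\subseteq J_\omega$ for every $n$, equivalently (by duality) $\widetilde{\mathcal H}_\omega\subseteq\widetilde{\mathcal H}_{\mu_n}$. Given $x\in\widetilde{\mathcal H}_\omega$, the relation $L_\omega=\sum_m 2^{-m}L_{\mu_m}$ presents $x=L_\omega(x)$ as a strictly convex combination of the contractions $L_{\mu_m}(x)$ produced by UCP maps. Combining the Kadison--Schwarz inequality for each $L_{\mu_m}$ with operator convexity of $a\mapsto a^*a$ gives
$$
x^*x\leq\sum_m 2^{-m}L_{\mu_m}(x)^*L_{\mu_m}(x)\leq L_\omega(x^*x),
$$
and the algebraic identity
$$
\sum_m 2^{-m}L_{\mu_m}(x)^*L_{\mu_m}(x)-x^*x=\sum_m 2^{-m}\bigl(L_{\mu_m}(x)-x\bigr)^*\bigl(L_{\mu_m}(x)-x\bigr)
$$
shows that the equality case forces $L_{\mu_m}(x)=x$ for every $m$; dualising produces $J_{\mu_n}\subseteq J_\omega$. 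The remainder is routine: $\eta_{i,n}\in J_{\mu_n}\subseteq J_\omega$ together with $\|\gamma_i-\eta_{i,n}\|<2^{-n}$ and closedness of $J_\omega$ force $\gamma_i\in J_\omega$ for each $i$, whence $J\subseteq J_\omega$ by density, and (i) closes the loop.

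The main obstacle is collapsing the two inequalities above to an equality so as to unlock the displayed algebraic identity. This step exploits normality of $L_\omega$ and the positivity (state) hypothesis on $\omega$, and is precisely where the careful Schwarz equality-case analysis of \cite[Lemma 1.1]{wil} and \cite[Proposition 3.3]{nss} is indispensable; once that technical point is in place the rest of the argument is a direct transcription of the separability-plus-convex-combination scheme.
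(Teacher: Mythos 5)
The construction $\omega=\sum_n 2^{-n}\mu_n$ is the right starting point, but the step you call the crux --- $J_{\mu_n}\subseteq J_\omega$, equivalently $\widetilde{\mathcal H}_\omega\subseteq\widetilde{\mathcal H}_{\mu_n}$ --- is not just unproved in your sketch, it is \emph{false} in general, so no equality-case analysis can rescue it. Your chain of inequalities gives $x^*x\leq\sum_m 2^{-m}L_{\mu_m}(x)^*L_{\mu_m}(x)\leq L_\omega(x^*x)$, and to ``collapse'' it you would need $L_\omega(x^*x)=x^*x$, i.e.\ $x^*x\in\widetilde{\mathcal H}_\omega$; but $\widetilde{\mathcal H}_\omega$ is not a subalgebra of ${\mathcal B}(L^2({\mathbb G}))$ except in the degenerate situation described in Section~\ref{sec3}, and $x^*x$ is in general only \emph{sub}harmonic. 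A concrete counterexample to the inclusion: take ${\mathbb G}=F_2=\langle a,b\rangle$, let $\mu_1,\dots,\mu_4$ be (normal liftings of) the point masses $\delta_{a^{\pm1}},\delta_{b^{\pm1}}$, and $\omega=\tfrac14\sum\mu_j$. An operator $h\in\ell^\infty(F_2)$ lies in $\widetilde{\mathcal H}_{\mu_1}$ only if $h(sa)=h(s)$ for all $s$, so $\bigcap_j\widetilde{\mathcal H}_{\mu_j}\cap\ell^\infty(F_2)={\mathbb C}1$, whereas $\widetilde{\mathcal H}_\omega\cap\ell^\infty(F_2)$ contains all bounded harmonic functions of the simple random walk on $F_2$, which are plentiful and non-constant. (Here each $L_{\mu_j}$ is a $*$-homomorphism, so Kadison--Schwarz is an equality and one sees directly that $h^2\lneq L_\omega(h^2)$ for non-constant harmonic $h$.) In fact, if your directedness claim held for arbitrary sequences of states it would produce, on any second countable group, a single measure whose harmonic operators are contained in every $\widetilde{\mathcal H}_{\delta_g}$, i.e.\ a Liouville measure --- contradicting non-amenability of $F_2$. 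This is precisely why the theorem that uses Lemma~\ref{xxx} needs amenability to verify hypothesis (ii).

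The Willis-type argument avoids this step entirely. Two ingredients you are missing: first, since $d(\gamma,J_{\mu})=\lim_k\|\gamma\triangleright\mu_{(k)}\|$ for the Ces\`{a}ro averages $\mu_{(k)}=\frac1k\sum_{j\le k}\mu^{\triangleright j}$, which are again states, hypothesis (ii) can be upgraded: one may replace each $\mu_n$ by a state $\sigma_n$ (a Ces\`{a}ro average of its powers) satisfying the \emph{norm} estimate $\|\alpha\triangleright\sigma_n\|<\varepsilon_n$ for all $\alpha$ in the chosen finite set, and then $d(\gamma,J_\omega)\le\|\gamma\triangleright\omega^k\|$ is estimated directly, with no appeal to $J_{\sigma_n}\subseteq J_\omega$. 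Second, hypothesis (i) implies that $J$ is invariant under right convolution by states (since $\gamma\triangleright\sigma=\gamma-(\gamma-\gamma\triangleright\sigma)\in\gamma+J_\sigma\subseteq J$), so the finite set fed into (ii) at stage $n$ may be enlarged to include the words $\gamma_i\triangleright\sigma_{m_1}\triangleright\cdots\triangleright\sigma_{m_l}$ built from the previously chosen states; this adaptive choice is what controls the ``bad prefixes'' $\sigma_{n_1}\triangleright\cdots$ with $n_1<i$ occurring in the expansion of $\omega^k$, which your uniform choice cannot handle. You should rework the proof along these lines rather than via the Schwarz inequality.
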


The proof of the following result is similar to that of part (a) of \cite[Thorem 1.2]{wil}.
\begin{proposition}\label{p0}
	Let ${\Bbb G}$ be a locally compact quantum group such that $\mathcal{T}_\triangleright({\mathbb G})$ is separable. Then every ideal in the set $\mathcal{J}=\{J_\omega: \omega\in S(\mathcal{T}_\triangleright({\mathbb G}))\}$ is contained in a maximal one.
\end{proposition}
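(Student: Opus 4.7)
The plan is to use Zorn's lemma on $\mathcal{J}$, with the upper-bound step supplied by Lemma \ref{xxx}. Fix $J_{\omega_0}\in\mathcal{J}$ and set $\mathcal{J}_0=\{J_\omega\in\mathcal{J}:\,J_{\omega_0}\subseteq J_\omega\}$, partially ordered by inclusion. It suffices to show that every chain $\{J_{\omega_\alpha}\}_{\alpha\in\Lambda}\subseteq\mathcal{J}_0$ admits an upper bound in $\mathcal{J}_0$, since then $\mathcal{J}_0$ contains a maximal element, which is a maximal member of $\mathcal{J}$ containing $J_{\omega_0}$.

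To produce this upper bound, I would form the closed left ideal
\[
J=\overline{\bigcup_{\alpha\in\Lambda}J_{\omega_\alpha}}^{\|\cdot\|}\subseteq\mathcal{T}_\triangleright(\mathbb{G}),
\]
which contains every $J_{\omega_\alpha}$ and in particular $J_{\omega_0}$; the aim is to apply Lemma \ref{xxx} to obtain a state $\omega^\star$ with $J=J_{\omega^\star}$. Condition (ii) of the lemma is the easy half: for any finite subset $A=\{\gamma_1,\ldots,\gamma_k\}\subseteq J$ and $\varepsilon>0$, each $\gamma_j$ lies within $\varepsilon$ of some $J_{\omega_{\alpha_j}}$, and totality of the order yields a common index $\alpha^\star$ with $J_{\omega_{\alpha_j}}\subseteq J_{\omega_{\alpha^\star}}$ for every $j$, so that $d(\gamma_j,J_{\omega_{\alpha^\star}})<\varepsilon$ uniformly in $j$.

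The main obstacle is verifying hypothesis (i) of Lemma \ref{xxx}, which as formulated requires $J_\omega\subseteq J$ for \emph{every} state $\omega$, not merely for those in the chain. This is precisely where separability of $\mathcal{T}_\triangleright(\mathbb{G})$ is indispensable, as in Willis's proof of \cite[Lemma 1.1]{wil}. The idea is to enumerate a countable dense sequence of states, combine them via the convex-combination identity
\[
\gamma-\gamma\triangleright\tfrac{1}{2}(\omega+\omega')=\tfrac{1}{2}\bigl[(\gamma-\gamma\triangleright\omega)+(\gamma-\gamma\triangleright\omega')\bigr],
\]
and take Cesaro averages along the chain to manufacture a single master state $\omega^\star$ whose associated ideal absorbs every $J_\omega$ in $J$; a diagonal argument on the countable dense data then secures condition (i). Once Lemma \ref{xxx} delivers $J=J_{\omega^\star}\in\mathcal{J}_0$, we have the desired upper bound for the chain, and Zorn's lemma completes the proof.
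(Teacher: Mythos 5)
Your Zorn's lemma framework is the right skeleton, and your verification of condition (ii) of Lemma \ref{xxx} for the closed union $J$ of a chain is correct. But the argument breaks at exactly the point you flag: condition (i) of Lemma \ref{xxx} demands $J_\omega\subseteq J$ for \emph{every} state $\omega$, and this cannot hold for the closed union of an arbitrary chain. Indeed, if Lemma \ref{xxx} were applicable to every such $J$, then its conclusion $J=J_{\omega^\star}$ combined with condition (i) would force $J_{\omega^\star}$ to contain $J_\omega$ for \emph{all} states $\omega$; applying this to the singleton chain $\{J_{\omega_0}\}$ for an arbitrary $\omega_0$ would make every $J_{\omega_0}$ the unique maximal element of $\mathcal{J}$, which is absurd unless all the ideals $J_\omega$ coincide. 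Lemma \ref{xxx} is tailored to the single subspace $L^\infty(\widehat{\mathbb{G}})_{\perp}$, for which (i) holds automatically, and in the paper it serves only the subsequent amenability theorem; it is not the engine behind Proposition \ref{p0}.

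Your proposed repair does not close this gap. Condition (i) is a property of the fixed subspace $J$, so no choice of a ``master state'' $\omega^\star$ can secure it; and the convex-combination identity you invoke shows $J_{\frac12(\omega+\omega')}\subseteq\overline{J_\omega+J_{\omega'}}$, i.e.\ averaging makes the ideal \emph{smaller}, which is the opposite of the absorption $J_{\omega_\alpha}\subseteq J_{\omega^\star}$ you actually need (and such absorption genuinely fails for general convex combinations of states; it can only be extracted from the nested structure of the chain). What the proposition requires, following the proof of \cite[Theorem 1.2(a)]{wil} to which the paper defers, is a direct construction of the upper bound: use separability to replace the chain by a countable increasing subfamily $J_{\omega_1}\subseteq J_{\omega_2}\subseteq\cdots$ whose union is dense in $\bigcup_\alpha J_{\omega_\alpha}$, assemble from the $\omega_k$ a single state $\omega^\star$, and then prove --- using the characterization $J_\omega=\{\gamma:\lim_n\|\gamma\triangleright\omega_n\|=0\}$ together with the nesting --- that $J_{\omega_k}\subseteq J_{\omega^\star}$ for every $k$, so that $J_{\omega^\star}$ is an upper bound for the chain (it need not equal $\overline{\bigcup_\alpha J_{\omega_\alpha}}$). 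That containment is the heart of the matter and is entirely missing from your sketch.
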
	
\begin{theorem}
	Let ${\Bbb G}$ be a locally compact quantum group such that $\mathcal{T}_\triangleright({\mathbb G})$ is separable. Consider the following statements.
	
	\rm{(i)} $\mathbb{G}$ is co-amenable.
	
	\rm{(ii)} $\mathbb{G}$ is amenable.
	
	\rm{(iii)} For every $\omega\in S(\mathcal{T}_\triangleright({\mathbb G}))$, the left ideal $J_\omega$ has a bounded right approximate identity.
	
	\rm{(iv)} The set $\mathcal{J}=\{J_\omega: \omega\in S(\mathcal{T}_\triangleright({\mathbb G}))\}$ has a unique maximal ideal.
	
	Then the following hold. (iv)$\Longleftrightarrow$(ii), (i)$\Longrightarrow$(iii) and (i)$+$(ii)$\Longleftrightarrow$(iii)$+$(iv). Moreover, if (iv) holds, then $L^\infty(\widehat{\mathbb{G}})_{\perp}$ is the unique maximal ideal in $\mathcal{J}$ .
\end{theorem}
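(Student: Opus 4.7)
The argument pivots on the preliminary identity $\bigcap_{\omega \in S(\mathcal{T}_\triangleright({\Bbb G}))} \widetilde{\mathcal H}_\omega = L^\infty(\widehat{\Bbb G})$. The nontrivial inclusion I would establish first: if $\omega\triangleright x = x$ for every state $\omega$, then a Jordan decomposition of an arbitrary $\tau \in \mathcal{T}_\triangleright({\Bbb G})$ as a complex combination of at most four states gives $\tau\triangleright x = \langle\tau,1\rangle x$ for all $\tau$, which forces $\widetilde{\Gamma}(x) = x\otimes 1$ and, via Lemma \ref{lem2}, places $x \in L^\infty(\widehat{\Bbb G})$. Given this identity, (iv)$\Rightarrow$(ii) with the identification of the maximal ideal is short: if $J_{\omega_0}$ is the unique maximal element of $\mathcal{J}$, Proposition \ref{p0} forces $J_\omega \subseteq J_{\omega_0}$ for every state $\omega$; passing to annihilators and intersecting yields $\widetilde{\mathcal H}_{\omega_0} \subseteq L^\infty(\widehat{\Bbb G})$, and combined with the automatic reverse inclusion this gives $\widetilde{\mathcal H}_{\omega_0} = L^\infty(\widehat{\Bbb G})$, so $J_{\omega_0}=L^\infty(\widehat{\Bbb G})_\perp$ and Proposition \ref{ame} delivers amenability.

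The real work is (ii)$\Rightarrow$(iv). My plan is to apply Lemma \ref{xxx} with $J := L^\infty(\widehat{\Bbb G})_\perp$ to produce a state $\omega_0$ with $J_{\omega_0}=L^\infty(\widehat{\Bbb G})_\perp$; uniqueness of the maximal ideal then follows automatically from $J_\omega \subseteq L^\infty(\widehat{\Bbb G})_\perp = J_{\omega_0}$. Hypothesis (i) of Lemma \ref{xxx} is free. The key step is hypothesis (ii). Amenability together with \cite[Theorem 4.2]{cran} furnishes a norm-one right $\mathcal{T}_\triangleright({\Bbb G})$-module projection $P:{\mathcal B}(L^2({\Bbb G}))\to L^\infty(\widehat{\Bbb G})$; precomposing with a state on $L^\infty(\widehat{\Bbb G})$ gives a state $m\in{\mathcal B}(L^2({\Bbb G}))^*$ with $L_m=P$. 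By Goldstine, $m$ is a weak$^*$-limit of states $(\omega_\alpha)\subset\mathcal{T}_\triangleright({\Bbb G})$, and a short duality calculation using $P(x)\in L^\infty(\widehat{\Bbb G})$ shows that $\gamma\triangleright\omega_\alpha\to 0$ in the weak topology $\sigma(\mathcal{T}_\triangleright({\Bbb G}),{\mathcal B}(L^2({\Bbb G})))$ for every $\gamma \in L^\infty(\widehat{\Bbb G})_\perp$. For any finite $A\subset L^\infty(\widehat{\Bbb G})_\perp$ and $\varepsilon>0$, Mazur's theorem applied coordinatewise in $\mathcal{T}_\triangleright({\Bbb G})^{|A|}$ then extracts a convex combination $\omega$ of the $\omega_\alpha$---still a state---with $\|\gamma\triangleright\omega\|<\varepsilon$ simultaneously for $\gamma\in A$; since $\gamma-\gamma\triangleright\omega\in J_\omega$ this gives $d(\gamma,J_\omega)<\varepsilon$ as required.

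The remaining implications are routine. For (i)$\Rightarrow$(iii), co-amenability supplies $\mathcal{T}_\triangleright({\Bbb G})$ with a bounded right approximate identity $(e_i)$ by \cite[Proposition 5.4]{hnr2011}, and the double-indexed net $(e_i - e_i\triangleright\omega_n)$ is then a bounded right approximate identity for $J_\omega$, as noted just before the theorem. The equivalence (i)+(ii)$\Leftrightarrow$(iii)+(iv) is now clear: the forward direction combines (i)$\Rightarrow$(iii) and (ii)$\Rightarrow$(iv); the converse uses (iv)$\Rightarrow$(ii) to obtain amenability and to identify the unique maximal ideal with $L^\infty(\widehat{\Bbb G})_\perp$, whereupon (iii) supplies a bounded right approximate identity for $L^\infty(\widehat{\Bbb G})_\perp$ and Theorem \ref{a-co}(i) yields co-amenability. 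The principal technical obstacle throughout is the Mazur-theorem verification of hypothesis (ii) of Lemma \ref{xxx} in the step (ii)$\Rightarrow$(iv), where separability of $\mathcal{T}_\triangleright({\Bbb G})$ is indispensable.
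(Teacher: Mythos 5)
Your proposal is correct, and for three of the four pieces --- the intersection identity $\bigcap_{\omega}\widetilde{\mathcal H}_\omega=L^\infty(\widehat{\mathbb{G}})$ via Jordan decomposition and Lemma \ref{lem2}, the deduction of (iv)$\Rightarrow$(ii) from Proposition \ref{p0} and Proposition \ref{ame}, the implication (i)$\Rightarrow$(iii), and the closing appeal to Theorem \ref{a-co} --- it coincides with the paper's argument. The genuine divergence is in (ii)$\Rightarrow$(iv). The paper starts from a Reiter-type net of normal states $(f_i)$ in $L^1(\mathbb{G})$ with $\|f\star f_i-\langle f,1\rangle f_i\|_1\to 0$, lifts them to normal states $\omega_i$ of $\mathcal{T}_\triangleright(\mathbb{G})$, and can then only conclude $\langle(\sigma\triangleright\gamma)\triangleright\omega_i,x\rangle\to 0$ (because the pairing factors through $x\triangleright\sigma\in L^\infty(\mathbb{G})$); it must then invoke Theorem \ref{thm01}, the factorization $\langle\mathcal{T}_\triangleright(\mathbb{G})\triangleright L^\infty(\widehat{\mathbb{G}})_\perp\rangle=L^\infty(\widehat{\mathbb{G}})_\perp$, to upgrade this to $\gamma\triangleright\omega_i\to 0$ weakly for all $\gamma\in L^\infty(\widehat{\mathbb{G}})_\perp$, before applying Mazur and Lemma \ref{xxx}. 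You instead take the covariant conditional expectation $P:\mathcal{B}(L^2(\mathbb{G}))\to L^\infty(\widehat{\mathbb{G}})$ from \cite[Theorem 4.2]{cran}, observe that $m=\hat f\circ P$ satisfies $L_m=P$ (your computation $\langle m,x\triangleright\omega\rangle=\hat f(P(x)\triangleright\omega)=\langle\omega,P(x)\rangle$ is valid since $P$ is a right module map landing in $L^\infty(\widehat{\mathbb{G}})$, where the right action is $\hat x\triangleright\omega=\langle\omega,\hat x\rangle 1$), and approximate $m$ by normal states; the resulting weak convergence $\gamma\triangleright\omega_\alpha\to 0$ then holds for \emph{all} $\gamma\in L^\infty(\widehat{\mathbb{G}})_\perp$ directly, so Theorem \ref{thm01} is not needed. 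Your route is shorter and bypasses the slice-map arguments behind Theorem \ref{thm01}, at the price of leaning on the Crann--Neufang injectivity theorem as a black box (which the paper in any case already uses in Proposition \ref{ame} and Theorem \ref{a-co}); the paper's route is more elementary at this step, extracting the asymptotic invariance straight from the definition of amenability. Both correctly reduce the finite-set hypothesis of Lemma \ref{xxx} to a simultaneous Mazur argument on convex combinations of states.
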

\begin{proof}
	(iv)$\Longrightarrow$(ii). Suppose that $\mathcal{J}$ has a unique maximal ideal, say $J_{\omega_0}\in{\mathcal J}$. Since by Proposition \ref{p0}  for each $\omega\in S(\mathcal{T}_\triangleright({\mathbb G}))$ the ideal $J_\omega$ is contained in a maximal ideal in ${\mathcal J}$, it follows that $J_\omega\subseteq J_{\omega_0}$. This means that
	$$J_{\omega_0}^\perp=\{x\in {\mathcal B}(L^2({\Bbb G})): L_\omega(x)=x ~\hbox{for all}~ \omega\in S(\mathcal{T}_\triangleright({\mathbb G})) \}.$$
	Moreover, it is easy to see that $L^\infty(\widehat{\mathbb{G}})\subseteq J_{\omega_0}^\perp$. To prove the converse inclusion, suppose that $x\in J_{\omega_0}^\perp$. Then $L_\omega(x)=\langle \omega, 1\rangle x$ for all $\omega\in \mathcal{T}_\triangleright({\mathbb G})$. Therefore, for every $\omega, \gamma\in \mathcal{T}_\triangleright({\mathbb G})$, we have 
	$$
	\langle \widetilde{\Gamma}(x), \gamma\otimes \omega\rangle=\langle x, \gamma\triangleright \omega\rangle=\langle L_\omega(x), \gamma\rangle=\langle \omega, 1\rangle\langle x, \gamma\rangle=\langle x\otimes 1, \gamma\otimes \omega\rangle. 
	$$
	This shows that $\widetilde{\Gamma}(x)=x\otimes 1$, and so 
	$x\in L^\infty(\widehat{\mathbb{G}})$ by Lemma \ref{lem2}.  Therefore, $\widetilde{\mathcal H}_{\omega_0}=J_{\omega_0}^\perp=L^\infty(\widehat{\mathbb{G}})$, or equivalently, $J_{\omega_0}=L^\infty(\widehat{\mathbb{G}})_\perp$. Now, the result follows from Proposition \ref{ame}.
	
	(i)$\Longrightarrow$(iii). suppose that  $\mathbb{G}$ is co-amenable. Then by \cite[Proposition 5.4]{hnr2011} $\mathcal{T}_\triangleright({\mathbb G})$ has a bounded right approximate identity and so, as described above, for every $\omega\in S(\mathcal{T}_\triangleright({\mathbb G}))$, the left ideal $J_\omega$ has a bounded right approximate identity.
	
	(ii)$\Longrightarrow$(iv).
	Suppose that ${\Bbb G}$ is amenable. Then by a standard argument we can find a net of normal states  $(f_i)$ in $L^1({\Bbb G})$  such that $$\|f\star f_i-\langle f, 1\rangle f_i\|_1\rightarrow 0$$ for all $f\in L^1({\Bbb G})$.  For each $i$, we let $\omega_i\in {\mathcal T}_\triangleright({\Bbb G})$ be a norm preserving  normal extension  of $f_{i}$. Given $\gamma\in L^\infty(\widehat{\Bbb G})_\perp$, $\sigma\in \mathcal{T}_\triangleright({\mathbb G})$ and $x\in {\mathcal B}(L^2({\Bbb G}))$, let $f=\pi(\gamma)\in L^1({\Bbb G})$. Then since $\langle f, 1\rangle=0$ and   $x\triangleright \sigma\in L^\infty({\Bbb G})$, we conclude that 
	\begin{eqnarray*}
		\langle(\sigma\triangleright\gamma)\triangleright\omega_{i}, x\rangle
		=\langle\gamma\triangleright\omega_{i}, x\triangleright\sigma\rangle
		=\langle f\star f_{i}, x\triangleright\sigma\rangle\rightarrow 0.
	\end{eqnarray*}
	As $\langle \mathcal{T}_\triangleright({\mathbb G})\triangleright L^\infty(\widehat{\mathbb{G}})_{\perp}\rangle=L^\infty(\widehat{\mathbb{G}})_{\perp}$, by Theorem \ref{thm01}, it follows that  $$\langle \gamma\triangleright\omega_{i}, x\rangle\rightarrow 0$$ for all $x\in {\mathcal B}(L^2({\Bbb G}))$ and $\gamma\in L^\infty(\widehat{\Bbb G})_\perp$. Applying Mazur's theorem, we can obtain a net  of normal states $(\omega_{i})$ in $\mathcal{T}_\triangleright({\mathbb G})$   such that $\|\gamma\triangleright\omega_{i}\|\rightarrow 0$ for all $\gamma\in L^\infty(\widehat{\Bbb G})_\perp$. This shows that the conditions of
	Lemma \ref{xxx} are satisfied if we take $J=L^\infty(\widehat{\Bbb G})_\perp$ and hence there is a state $\omega$ in  $\mathcal{T}_\triangleright({\mathbb G})$ such that $J_\omega=L^\infty(\widehat{\Bbb G})_\perp$.
	Now, the equivalence  (iii)$+$(iv)$\Longleftrightarrow$(i)$+$(ii) follows from Theorem \ref{a-co}.
\end{proof}
\section{{The discrete quantum group case}}\label{sec5}
In this section, we consider discrete quantum groups ${\mathbb G}$. Since ${\mathbb G}$ is discrete, it follows from \cite[Theorem 3.7]{hnr} that $ {\mathcal K}(L^2({\Bbb G}))$ is a Banach $\mathcal{T}_\triangleright({\mathbb G})$-submodule of ${\mathcal B}\left(L^{2}(\mathbb{G})\right)$ when we consider the canonical
$\mathcal{T}_\triangleright({\mathbb G})$-bimodule  structure on
${\mathcal B}\left(L^{2}(\mathbb{G})\right)$. It is also known that the multiplier algebra of the $C^*$-algebra ${\mathcal K}(H)$ of compact operators on the Hilbert space $H$, is equal to ${\mathcal B}(H)$. 

\begin{lemma}\label{decom}
	Let ${\Bbb G}$ be a discrete quantum group. Then the algebra ${\mathcal B}(L^2({\Bbb G}))^*$, equipped with the left Arens product, can be decomposed as
	$$
	{\mathcal B}(L^2({\Bbb G}))^*=\mathcal{T}_\triangleright({\mathbb G})\oplus_1 {\mathcal K}(L^2({\Bbb G}))^\perp,
	$$ 
	of the closed subalgebra $\mathcal{T}_\triangleright({\mathbb G})$  and the weak$^*$ closed ideal ${\mathcal K}(L^2({\Bbb G}))^\perp$.
\end{lemma}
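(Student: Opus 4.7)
I would prove the lemma in two stages: first establish the $\ell^1$ Banach-space splitting, then separately check that each summand is compatible with the left Arens product.

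For the Banach-space decomposition I would invoke the classical fact (essentially due to Dixmier; see also Harmand--Werner--Werner) that ${\mathcal K}(H)$ is an $M$-ideal in ${\mathcal B}(H)$ for any Hilbert space $H$. Applied to $H=L^2({\mathbb G})$ this yields the isometric identification
$$
{\mathcal B}(L^2({\mathbb G}))^* \;=\; {\mathcal K}(L^2({\mathbb G}))^* \oplus_1 {\mathcal K}(L^2({\mathbb G}))^\perp \;=\; \mathcal{T}(L^2({\mathbb G})) \oplus_1 {\mathcal K}(L^2({\mathbb G}))^\perp,
$$
where $\mathcal{T}(L^2({\mathbb G}))$ sits in ${\mathcal B}(L^2({\mathbb G}))^*$ via the canonical embedding $\omega \mapsto \hat\omega$ of a predual into its bidual. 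Concretely, each $m\in{\mathcal B}(L^2({\mathbb G}))^*$ splits as $m=\hat\omega_m+(m-\hat\omega_m)$, where $\omega_m\in\mathcal{T}(L^2({\mathbb G}))$ is the trace-class operator representing the restriction $m|_{{\mathcal K}(L^2({\mathbb G}))}\in{\mathcal K}(L^2({\mathbb G}))^*$ and the remainder lies in ${\mathcal K}(L^2({\mathbb G}))^\perp$.

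Next I would check that $\mathcal{T}_\triangleright({\mathbb G})$ is a closed subalgebra under $\square$: it is norm closed as an $\ell^1$-summand, and Remark \ref{th2} gives $\omega\square\tau=\omega\triangleright\tau\in\mathcal{T}_\triangleright({\mathbb G})$ for $\omega,\tau\in\mathcal{T}_\triangleright({\mathbb G})$. That ${\mathcal K}(L^2({\mathbb G}))^\perp$ is weak$^*$-closed is automatic for an annihilator of a subspace. The real work is the two-sided ideal property, and the decisive input is the assumption (available at the start of the section, since ${\mathbb G}$ is discrete) that ${\mathcal K}(L^2({\mathbb G}))$ is a $\mathcal{T}_\triangleright({\mathbb G})$-submodule of ${\mathcal B}(L^2({\mathbb G}))$. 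For $n\in{\mathcal K}(L^2({\mathbb G}))^\perp$ and $k\in{\mathcal K}(L^2({\mathbb G}))$, unravelling the Arens product gives $\langle n\square k, a\rangle = \langle n, k\triangleright a\rangle = 0$ for every $a\in\mathcal{T}_\triangleright({\mathbb G})$, since $k\triangleright a\in{\mathcal K}(L^2({\mathbb G}))$; hence $n\square k=0$ in ${\mathcal B}(L^2({\mathbb G}))$, and therefore $\langle m\square n,k\rangle=\langle m,n\square k\rangle=0$ for every $m\in{\mathcal B}(L^2({\mathbb G}))^*$, showing $m\square n\in{\mathcal K}(L^2({\mathbb G}))^\perp$.

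For the opposite direction, $n\square m\in{\mathcal K}(L^2({\mathbb G}))^\perp$, the key computation is $m\square k\in{\mathcal K}(L^2({\mathbb G}))$ for every $m$: since $\langle m\square k,a\rangle=\langle m, k\triangleright a\rangle$ and $k\triangleright a$ is compact, the value only depends on $\omega_m:=m|_{{\mathcal K}(L^2({\mathbb G}))}\in\mathcal{T}(L^2({\mathbb G}))$, whence $m\square k=\hat\omega_m\square k=\omega_m\triangleright k$ (the second equality being immediate from the definition of $\square$, in the spirit of Remark \ref{th2}), which lies in ${\mathcal K}(L^2({\mathbb G}))$ by the submodule property. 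Thus $\langle n\square m,k\rangle=\langle n, m\square k\rangle=0$. The main obstacle I anticipate is precisely this identification $m\square k=\omega_m\triangleright k$, which reduces the Arens product on a compact argument to the pre-existing module action through the first component of the $\ell^1$ decomposition; once this is in place, the $M$-ideal input and Remark \ref{th2} deliver the rest.
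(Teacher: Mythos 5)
Your proof is correct and follows essentially the same route as the paper's: the $\ell^1$-splitting ${\mathcal B}(L^2({\Bbb G}))^*=\mathcal{T}(L^2({\Bbb G}))\oplus_1{\mathcal K}(L^2({\Bbb G}))^\perp$ (which the paper cites rather than rederiving from the $M$-ideal property), the vanishing $n\square k=0$ via compactness of $k\triangleright a$ for the left-ideal half, and the identification $m\square k=\omega_m\triangleright k\in{\mathcal K}(L^2({\Bbb G}))$ with $\omega_m=m|_{{\mathcal K}(L^2({\Bbb G}))}$ for the right-ideal half. No gaps.
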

\begin{proof}
	Clearly, ${\mathcal K}(L^2({\Bbb G}))^\perp$ is a weak$^*$ closed subspace of ${\mathcal B}(L^2({\Bbb G}))^*$ and by \cite[Proposition 1.5]{nss} we have the Banach space decomposition 
	$\mathcal{T}_\triangleright({\mathbb G})\oplus_1 {\mathcal K}(L^2({\Bbb G}))^\perp$. Thus it  suffice to prove that ${\mathcal K}(L^2({\Bbb G}))^\perp$ is an ideal in ${\mathcal B}(L^2({\Bbb G}))^*$. To prove this,  
	fix $n\in {\mathcal K}(L^2({\Bbb G}))^\perp$ and  $m\in {\mathcal B}(L^2({\Bbb G}))^*$, and $x\in {\mathcal K}(L^2({\Bbb G}))$. Then $n\square x=0$  since  $x\triangleright\gamma\in {\mathcal K}(L^2({\Bbb G}))$; see \cite[Theorems 3.1 and 3.7]{hnr} and hence $$\langle n\square x, \gamma\rangle=\langle n, x\triangleright\gamma\rangle=0$$ for all $\gamma \in \mathcal{T}_\triangleright({\mathbb G})$.  This implies that ${\mathcal K}(L^2({\Bbb G}))^\perp$ is a left ideal in ${\mathcal B}(L^2({\Bbb G}))^*$. To prove that ${\mathcal K}(L^2({\Bbb G}))^\perp$ is a right ideal in ${\mathcal B}(L^2({\Bbb G}))^*$, put  $\gamma_0=m|_{{\mathcal K}(L^2({\Bbb G}))}\in \mathcal{T}_\triangleright({\mathbb G})$. Then it is easy to see that  $m\square x=\gamma_0\triangleright x\in {\mathcal K}(L^2({\Bbb G}))$. This shows that $n\square m\in {\mathcal K}(L^2({\Bbb G}))^\perp$, as required.
\end{proof}
\begin{lemma}\label{ip}
	Let ${\Bbb G}$ be a discrete quantum group and let  $m\in {\mathcal B}(L^2({\Bbb G}))^*$ be a contractive idempotent. Then either 
	$m\in\mathcal{T}_\triangleright({\mathbb G})$  or $m\in {\mathcal K}(L^2({\Bbb G}))^\perp$.
\end{lemma}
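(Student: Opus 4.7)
The plan is to exploit the $\ell^{1}$-direct sum decomposition furnished by Lemma \ref{decom}. Write $m = \gamma + n$ with $\gamma\in \mathcal{T}_\triangleright({\mathbb G})$, $n\in {\mathcal K}(L^2({\Bbb G}))^\perp$, and $\|m\| = \|\gamma\| + \|n\|$. The goal is to show that one of the summands must vanish, using the fact that ${\mathcal K}(L^2({\Bbb G}))^\perp$ is a two-sided ideal and that contractive idempotents in $\mathcal{T}_\triangleright({\Bbb G})$ must have norm exactly $1$.

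First I would expand $m\,\square\, m = m$ according to the decomposition, obtaining
$$m\,\square\, m \;=\; \gamma\,\square\,\gamma \;+\; \gamma\,\square\, n \;+\; n\,\square\,\gamma \;+\; n\,\square\, n.$$
By Remark \ref{th2}, the first term equals $\gamma\triangleright\gamma$ and so lies in $\mathcal{T}_\triangleright({\mathbb G})$. The other three terms lie in ${\mathcal K}(L^2({\Bbb G}))^\perp$, because that space is a two-sided ideal in $({\mathcal B}(L^2({\Bbb G}))^*,\square)$ by Lemma \ref{decom}. The uniqueness of the $\oplus_1$ decomposition then forces the $\mathcal{T}_\triangleright({\mathbb G})$-components to match, yielding
$$\gamma\triangleright\gamma \;=\; \gamma,$$
i.e.\ $\gamma$ is an idempotent of $\mathcal{T}_\triangleright({\mathbb G})$.

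If $\gamma = 0$, then $m = n \in {\mathcal K}(L^2({\Bbb G}))^\perp$ and we are done. Otherwise, since $\triangleright$ is contractive, $\|\gamma\| = \|\gamma\triangleright\gamma\| \le \|\gamma\|^{2}$, which forces $\|\gamma\|\ge 1$. Combining this with the contractivity of $m$ through the $\ell^{1}$ norm identity gives
$$1 \;\ge\; \|m\| \;=\; \|\gamma\| + \|n\| \;\ge\; 1 + \|n\|,$$
so $\|n\| = 0$ and $m = \gamma \in \mathcal{T}_\triangleright({\mathbb G})$, which finishes the dichotomy.

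There is no real obstacle here beyond carefully invoking Lemma \ref{decom}: the argument is a three-line bookkeeping once one has the two-sided ideal property of ${\mathcal K}(L^2({\Bbb G}))^\perp$ and the $\ell^{1}$ norm, together with the elementary observation that a nonzero contractive idempotent in a contractively normed algebra has norm $1$.
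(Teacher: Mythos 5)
Your proof is correct and follows essentially the same route as the paper's: decompose $m$ via Lemma \ref{decom}, use the two-sided ideal property of ${\mathcal K}(L^2({\Bbb G}))^\perp$ to see that the $\mathcal{T}_\triangleright({\mathbb G})$-component is itself a contractive idempotent, and then use the $\ell^1$-norm identity to force one summand to vanish. You merely spell out the bookkeeping (the four-term expansion of $m\,\square\,m$ and the estimate $\|\gamma\|\le\|\gamma\|^2$) a bit more explicitly than the paper does.
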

\begin{proof}
	Suppose that $m=\omega+\gamma$, where $\omega\in\mathcal{T}_\triangleright({\mathbb G})$  and $\gamma\in {\mathcal K}(L^2({\Bbb G}))^\perp$.  By assumption and this fact that ${\mathcal K}(L^2({\Bbb G}))^\perp$ is an ideal in ${\mathcal B}(L^2({\Bbb G}))^*$, we obtain $\omega\triangleright\omega=\omega$. Moreover, by Lemma \ref{decom}, we have $\|m\|=\|\omega\|+\|\gamma\|$, which implies that $\omega$ is a contractive idempotent. Therefore, either $\|\omega\|=1$ or $\|\omega\|=0$. If $\|\omega\|=1$, then $\|\gamma\|=0$ and hence 
	$m=\omega\in \mathcal{T}_\triangleright({\mathbb G})$. If $\|\omega\|=0$, then $m=\gamma\in {\mathcal K}(L^2({\Bbb G}))^\perp$. 
\end{proof}

\begin{theorem}\label{ff}
	Let ${\Bbb G}$ be a discrete quantum group and let  $\omega\in \mathcal{T}_\triangleright({\mathbb G})$ with $\|\omega\|=1$. 
	Then the following statements are equivalent.
	
	{\rm (i)} The Cesaro sums $\omega_n$ do not converge to $0$ in the 
	weak$^*$ topology of  $\mathcal{T}_\triangleright({\mathbb G})$.
	
	{\rm (ii)} The weak$^*$ limit $\widetilde{\omega}=w^*-\lim_{n\rightarrow \infty}\omega_n$
	exists and $\widetilde{\omega}$ is a non-zero contractive idempotent in $\mathcal{T}_\triangleright({\mathbb G})$.
	
	{\rm (iii)} There is a  free ultrafilter ${\mathcal U}$ on ${\Bbb N}$ such that the functional  $m_{\mathcal U}=w^*-\lim_{\mathcal U}\omega_n$ is a non-zero contractive idempotent in $\mathcal{T}_\triangleright({\mathbb G})$ 
	
	{\rm (iv)} $\widetilde{\mathcal H}_\omega\cap {\mathcal K}(L^2({\Bbb G}))\neq\{0\}$. 
	
	{\rm (v)} There is $\gamma\in \mathcal{T}_\triangleright({\mathbb G})\setminus L^\infty({\Bbb G})_\perp$ such that $\omega\triangleright\gamma=\gamma$.
\end{theorem}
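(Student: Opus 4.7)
The plan is to establish the equivalences via the cycle (ii)$\Rightarrow$(iii)$\Rightarrow$(iv)$\Rightarrow$(i)$\Rightarrow$(iii) together with the side implications (iii)$\Rightarrow$(ii), (v)$\Rightarrow$(iv), and (iii)$\Rightarrow$(v). The backbone is the $\ell^1$-splitting ${\mathcal B}(L^2({\Bbb G}))^*=\mathcal{T}_\triangleright({\Bbb G})\oplus_1 {\mathcal K}(L^2({\Bbb G}))^\perp$ of Lemma~\ref{decom}, the dichotomy for contractive idempotents in Lemma~\ref{ip}, and Remark~\ref{th2} for rewriting the Arens product on elements of $\mathcal{T}_\triangleright$.

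For the loop (i)$\Leftrightarrow$(iii)$\Leftrightarrow$(ii): since $\mathcal{T}_\triangleright={\mathcal K}(L^2)^*$, the direction (iii)$\Rightarrow$(i) is immediate. For (i)$\Rightarrow$(iii) choose a free ultrafilter $\mathcal U$ along which $\omega_n|_{{\mathcal K}(L^2)}$ stays away from $0$; then $m_{\mathcal U}\notin {\mathcal K}(L^2)^\perp$, so Remark~\ref{th2} and Lemma~\ref{ip} put it in $\mathcal{T}_\triangleright$ as a non-zero contractive idempotent. The implication (ii)$\Rightarrow$(iii) is trivial. For (iii)$\Rightarrow$(ii), take any other weak$^*$-cluster point $m_2$ of $(\omega_n)$ in $\mathcal{B}(L^2)^*$. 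Remark~\ref{th2} gives the two constant sequences $\omega_n\square m_2=\omega_n\triangleright m_2=m_2$ and $m_{\mathcal U}\square \omega_n=m_{\mathcal U}\triangleright\omega_n=m_{\mathcal U}$; the weak$^*$-continuity of the left Arens product in its left variable (always) and in its right variable (because $m_{\mathcal U}\in \mathcal{T}_\triangleright$) then yield $m_{\mathcal U}\square m_2=m_2$ and $m_{\mathcal U}\square m_2=m_{\mathcal U}$, so $m_2=m_{\mathcal U}\in \mathcal{T}_\triangleright$. The alternative $m_2\in {\mathcal K}(L^2)^\perp$ is excluded because it would force $m_{\mathcal U}\in {\mathcal K}(L^2)^\perp\cap \mathcal{T}_\triangleright=\{0\}$. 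Uniqueness of weak$^*$-cluster points gives weak$^*$-convergence in ${\mathcal B}(L^2)^*$, which restricts to weak$^*$-convergence in $\mathcal{T}_\triangleright={\mathcal K}(L^2)^*$.

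Turning to (iv) and (v): for (iii)$\Rightarrow$(iv) I apply the projection $L_{m_{\mathcal U}}$ from Lemma~\ref{th0} to a finite-rank $x\in \mathcal{T}_\triangleright$ with $\langle m_{\mathcal U},x\rangle\neq 0$; since ${\mathcal K}(L^2)$ is a $\mathcal{T}_\triangleright$-submodule of ${\mathcal B}(L^2)$, $L_{m_{\mathcal U}}(x)\in \widetilde{\mathcal H}_\omega\cap {\mathcal K}(L^2)$, and its pairing with $\omega$ gives $\langle x,\omega\triangleright m_{\mathcal U}\rangle=\langle x,m_{\mathcal U}\rangle\neq 0$. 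For (iv)$\Rightarrow$(i), given a non-zero $y\in \widetilde{\mathcal H}_\omega\cap {\mathcal K}(L^2)$ and $\sigma\in \mathcal{T}_\triangleright$ with $\langle y,\sigma\rangle\neq 0$, the chain $\langle y\triangleright\sigma,\omega_n\rangle=\langle y,\sigma\triangleright\omega_n\rangle=\langle \omega_n\triangleright y,\sigma\rangle=\langle y,\sigma\rangle$ produces a non-zero constant sequence on the compact operator $y\triangleright\sigma$, precluding weak$^*$-convergence of $\omega_n$ to $0$ in $\mathcal{T}_\triangleright$. For (v)$\Rightarrow$(iv), a fixed point $\gamma\in \mathcal{T}_\triangleright\setminus L^\infty({\Bbb G})_\perp$ has $\pi(\gamma)\neq 0$; since $L^1({\Bbb G})$ is unital for discrete ${\Bbb G}$, a standard argument produces $x_0\in C_0({\Bbb G})$ with $\pi(\gamma)\star x_0\neq 0$, and then $\gamma\triangleright x_0=\pi(\gamma)\star x_0$ sits in $L^\infty({\Bbb G})\cap {\mathcal K}(L^2)=C_0({\Bbb G})\subset {\mathcal K}(L^2)$ and in $\widetilde{\mathcal H}_\omega$ by associativity.

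The main obstacle is (iii)$\Rightarrow$(v). I set $\gamma:=m_{\mathcal U}$ and show $\pi(m_{\mathcal U})\neq 0$ by contradiction. Assuming $\pi(m_{\mathcal U})=0$, the weak$^*$-continuity of $\pi^{**}\colon \mathcal{T}_\triangleright^{**}\to L^1({\Bbb G})^{**}$ forces $\langle \pi(\omega_n),z\rangle\to 0$ along $\mathcal U$ for every $z\in L^\infty({\Bbb G})$. On the other hand, combining the already-established (iv) with $\langle {\mathcal K}(L^2)\triangleright \mathcal{T}_\triangleright\rangle = C_0({\Bbb G})$ applied to $y\triangleright\sigma$ upgrades (iv) to $\widetilde{\mathcal H}_\omega\cap C_0({\Bbb G})\neq\{0\}$; pick a non-zero $y$ there, which then lies in $L^\infty({\Bbb G})$ and satisfies $\pi(\omega_n)\star y=y$ for all $n$. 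Pairing with any $f\in L^1({\Bbb G})$ and using the identity $\langle \pi(\omega_n)\star y,f\rangle=\langle \pi(\omega_n),y\star f\rangle$ gives $\langle y,f\rangle=\lim_{\mathcal U}\langle \pi(\omega_n),y\star f\rangle=0$, forcing $y=0$ and producing the desired contradiction. The delicate point is this interplay between the two different contexts (${\mathcal K}(L^2)$ vs.\ $C_0({\Bbb G})$, $\mathcal{T}_\triangleright$ vs.\ $L^1({\Bbb G})$) required to show that $m_{\mathcal U}$ itself serves as the witness for (v) and that the pathological case $m_{\mathcal U}\in L^\infty({\Bbb G})_\perp$ cannot occur.
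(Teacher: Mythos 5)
Your proof is correct and follows essentially the same route as the paper's: the $\ell^1$-decomposition of Lemma \ref{decom}, the idempotent dichotomy of Lemma \ref{ip}, the uniqueness-of-cluster-points argument via one-sided weak$^*$-continuity of the Arens product, and the submodule property of ${\mathcal K}(L^2({\Bbb G}))$ for (iii)$\Rightarrow$(iv) and (iv)$\Rightarrow$(i). The only real divergence is your justification in (iii)$\Rightarrow$(v) that $m_{\mathcal U}\notin L^\infty({\Bbb G})_\perp$ (which the paper leaves implicit); your detour through $\widetilde{\mathcal H}_\omega\cap C_0({\Bbb G})$ is valid, but it also follows in one line from idempotency, since $x\triangleright m_{\mathcal U}\in L^\infty({\Bbb G})$ for every $x\in {\mathcal B}(L^2({\Bbb G}))$, so $m_{\mathcal U}\in L^\infty({\Bbb G})_\perp$ would force $m_{\mathcal U}=m_{\mathcal U}\triangleright m_{\mathcal U}=0$.
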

\begin{proof}
	(i)$\Rightarrow$(ii). Suppose that the Cesaro sums  $\omega_n$ do not converge to $0$ in the 
	weak$^*$ topology of  $\mathcal{T}_\triangleright({\mathbb G})$. Then there is a  free ultrafilter ${\mathcal U}$ on ${\Bbb N}$ such that $m_{\mathcal U}=w^*-\lim_{\mathcal U}\omega_n$ is non-zero on ${\mathcal K}(L^2({\Bbb G}))$. Thus $m_{\mathcal U}$ is a contractive idempotent in $\mathcal{T}_\triangleright({\mathbb G})$ by Lemma \ref{ip}.  Since the sequence $(\omega_n)_{n\in {\Bbb N}}$
	is bounded, every subnet of it  has a subnet converging weak$^*$ to some $m_{\mathcal V}$ with respect to some free ultrafilter ${\mathcal V}$ on ${\Bbb N}$. Since $\omega\triangleright m_{\mathcal U}=m_{\mathcal U}$, we obtain that $\omega_n\triangleright m_{\mathcal U}=m_{\mathcal U}$ for all $n\in{\Bbb N}$. This shows that 
	$$m_{\mathcal V}\square m_{\mathcal U}=m_{\mathcal V}\triangleright m_{\mathcal U}=w^*-\lim_{\mathcal V}(\omega_n\triangleright m_{\mathcal U})=m_{\mathcal U}.$$
	Again by Lemma \ref{ip} we give that $m_{\mathcal V}$ is also a non-zero contractive idempotent in $\mathcal{T}_\triangleright({\mathbb G})$. Moreover, by \cite[Theorem 3.7]{hnr} discretness of ${\Bbb G}$ implies that  the convolution $\triangleright$ on $\mathcal{T}_\triangleright({\mathbb G})$ is weak$^*$ continuous on the right. Therefore, $$m_{\mathcal U}=m_{\mathcal V}\triangleright m_{\mathcal U}=w^*-\lim_{\mathcal U}(m_{\mathcal V}\triangleright \omega_n)=m_{\mathcal V}.$$
	This shows that $m_{\mathcal U}$ is the only weak$^*$
	cluster point of the sequence $(\omega_n)_{n\in {\Bbb N}}$. 
	Thus, the sequence  $(\omega_n)_{n\in {\Bbb N}}$ converges weak$^*$ in $\mathcal{T}_\triangleright({\mathbb G})$ to $m_{\mathcal U}$.
	
	The implication	(ii)$\Rightarrow$(iii) is trivial.	
	
	(iii)$\Rightarrow$(iv). Since $m_{\mathcal U}$ is a non-zero idempotent, we can find $x\in {\mathcal K}(L^2({\Bbb G}))$ such that $y:=m_{\mathcal U}\triangleright x\neq 0$. Moreover, it is clear that $y\in \widetilde{\mathcal H}_\omega\cap {\mathcal K}(L^2({\Bbb G}))$.

	(iv)$\Rightarrow$(i). Let $x\in\widetilde{\mathcal H}_\omega\cap {\mathcal K}(L^2({\Bbb G}))$ be non-zero. Then there is 
	$\gamma\in \mathcal{T}_\triangleright({\mathbb G})$ such that $\langle x, \gamma\rangle\neq 0$. This implies that $\langle \omega_n, x\triangleright \gamma\rangle=\langle \omega_n\triangleright x, \gamma\rangle=\langle x, \gamma\rangle$ for all $n\in {\Bbb N}$.
	This shows that the Cesaro sums $\omega_n$ do not converge to $0$ in the 
	weak$^*$ topology of  $\mathcal{T}_\triangleright({\mathbb G})$.

	(iii)$\Rightarrow$(v). It suffice to take $\gamma=m_{\mathcal U}$.

	(v)$\Rightarrow$(iv). Since $\gamma\in \mathcal{T}_\triangleright({\mathbb G})\setminus L^\infty({\Bbb G})_\perp$, there is $x\in {\mathcal K}(L^2({\Bbb G}))$ such that $y:=L_\gamma(x)=\gamma\triangleright x\neq 0$. On the other hand, by assumption, we have
	$$
	L_\omega(y)=L_\omega(L_\gamma(x))=L_{\omega\triangleright\gamma}(x)=L_\gamma(x)=y, 
	$$
	which implies that $y\in \widetilde{\mathcal H}_\omega\cap {\mathcal K}(L^2({\Bbb G}))$.
\end{proof}

\begin{corollary}
	Let ${\Bbb G}$ be a discrete and infinite quantum group and let  $\omega\in S(\mathcal{T}_\triangleright({\mathbb G}))$ be non-degenerate. Then  the Cesaro sums $\omega_n$ converge to $0$ in the 
	weak$^*$ topology of  $\mathcal{T}_\triangleright({\mathbb G})$.  
\end{corollary}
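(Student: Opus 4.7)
The plan is to derive this corollary as a straightforward combination of the two main structural results in Sections~3 and~5 of the paper, namely Theorem~\ref{com} (the compactness characterization via non-trivial compact harmonic operators) and Theorem~\ref{ff} (the equivalence in the discrete case between non-convergence of Cesaro sums and existence of non-zero compact harmonic operators). The argument is by contradiction and takes essentially one line once both theorems are invoked.

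More precisely, I would argue as follows. Assume toward contradiction that the Cesaro sums $\omega_n$ do not converge to $0$ in the weak$^*$ topology of $\mathcal{T}_\triangleright({\mathbb G})$. Since ${\Bbb G}$ is discrete and $\omega\in\mathcal{T}_\triangleright({\mathbb G})$ has norm one, Theorem~\ref{ff} applies, and the implication (i)$\Rightarrow$(iv) yields $\widetilde{\mathcal H}_\omega\cap {\mathcal K}(L^2({\Bbb G}))\neq\{0\}$. Since $\omega$ is moreover a non-degenerate state, Theorem~\ref{com} then forces ${\Bbb G}$ to be compact.

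To conclude I would use the elementary fact recalled earlier in the paper that a quantum group which is simultaneously compact and discrete is finite (indeed, this is the very definition of finiteness quoted just before Theorem~\ref{com}: ${\Bbb G}$ is finite iff $L^\infty({\Bbb G})$ is finite-dimensional iff ${\Bbb G}$ is compact and discrete). But ${\Bbb G}$ is assumed discrete and infinite, so ${\Bbb G}$ cannot be compact, a contradiction. Hence $\omega_n\to 0$ weak$^*$ in $\mathcal{T}_\triangleright({\mathbb G})$.

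There is really no obstacle here, since both Theorem~\ref{ff} and Theorem~\ref{com} are already in hand; the only thing to check is that the hypotheses of these two theorems are met, namely that $\|\omega\|=1$ (which follows from $\omega$ being a state) and that $\omega$ is non-degenerate in the sense required by Theorem~\ref{com} (which is our hypothesis). Thus the proof reduces to a clean invocation of previously established equivalences.
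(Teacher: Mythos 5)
Your proof is correct and follows exactly the route the paper intends: the paper's own proof simply states that the corollary is ``an immediate consequence of Theorems \ref{com} and \ref{ff},'' and your argument spells out precisely that chain (non-convergence $\Rightarrow$ non-zero compact harmonic operator $\Rightarrow$ compactness $\Rightarrow$ finiteness, contradicting infiniteness). No further comment is needed.
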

\begin{proof}
This is an immediate consequence of Theorems \ref{com} and \ref{ff}.
\end{proof}

\end{document}